\documentclass[12pt,a4paper]{article}
\usepackage{amsmath,amssymb,amsfonts,amsthm,mathtools,mathrsfs}
\usepackage{enumitem}
\usepackage[compress]{cite}
\usepackage[dvipsnames]{xcolor}
\usepackage[colorlinks,linkcolor=blue,citecolor=cyan,urlcolor=blue]{hyperref}

\allowdisplaybreaks

\numberwithin{equation}{section}
\newtheorem{theorem}{Theorem}[section]
\newtheorem{lemma}[theorem]{Lemma}

\newtheorem{proposition}[theorem]{Proposition}
\theoremstyle{definition}
\newtheorem{definition}[theorem]{Definition}
\newtheorem{remark}[theorem]{Remark}

\def\e{\varepsilon}
\newcommand{\dd}{\mathrm d}
\newcommand{\Div}{\operatorname{div}}
\newcommand{\Tr}{\operatorname{Tr}}
\newcommand{\R}{\mathbb R}
\newcommand{\N}{\mathbf N}
\newcommand{\norm}[1]{\left\lVert #1\right\rVert}
\newcommand{\Ug}{\mathbf U_{\mathbf g}}
\newcommand{\Hg}{H_{\mathbf g}}
\newcommand{\vF}{\mathbf v_{\mathbf F}}

\begin{document}
\title{On well-posedness theory of very weak solutions to Navier--Stokes equations on irregular domains with nonhomogeneous Dirichlet boundary data}
\author{
		Xiaojin Bai\thanks{ School of Mathematical Sciences, Shanghai Jiao Tong 
			University, Shanghai 200240, China.  Email: baileymoon@sjtu.edu.cn} 
	\and Siran Li \thanks{ Corresponding author at: School of Mathematical Sciences $\&$ CMA-Shanghai, Shanghai Jiao Tong 	University, Shanghai 200240, China.  Email: siran.li@sjtu.edu.cn} 
	\and Xiangxiang Su \thanks{School of Mathematical Sciences, Shanghai Jiao Tong 
		University, Shanghai 200240, China. Email: sjtusxx@sjtu.edu.cn}}
\date{}
\pagestyle{myheadings}
\markboth{ Very weak solutions to the Navier-Stokes equations}
{Very weak solutions to the Navier-Stokes equations}
\maketitle

\begin{abstract}
The well-posedness theory of very weak solutions is a central topic in mathematical hydrodynamics, especially in the regularity theory for Navier--Stokes equations. It has been fully developed for incompressible fluid flows on bounded domains in $\R^3$ of $C^{2,1}$-regularity. In this paper, based on the analytic theories in [D. Breit and A. Gaudin, ArXiv Preprint: 2511.19091  (2025)] and [V.~G. Maz'ya and T.~O. Shaposhnikova, Vol.337, Grundlehren der mathematischen Wissenschaften (2009)], we establish the well-posedness theory of very weak solutions to the Navier--Stokes equations on bounded Lipschitz domains whose boundary has local graphing functions with sufficiently small Sobolev multiplier norm, which contain the bounded Lipschitz domains with sufficiently small Lipschitz constants as a special case.

\end{abstract}

\begin{center}
\begin{minipage}{5.5in}
Mathematics Subject Classification 2020: 35Q30, 76D05, 76D03.\\
Key words: Navier--Stokes equations; very weak solutions; Sobolev
multipliers; Lipschitz domains; Stokes semigroup; nonhomogeneous
boundary data.
\end{minipage}
\end{center}

\medskip
\noindent
{\bf Statements and Declarations}.
\\
\noindent
Competing Interests: The authors declare that they have no competing interests.\\
Data Availability: No data was used for the research described in the article.\\
 AI Statement:  The authors thank
 ChatGPT 5.5 AI model  for  computational assistance and language editing. All mathematical derivations, conclusions, and errors remain solely the responsibility of the authors.

\medskip
\noindent
{\bf Acknowledgement}. The authors are indebted to Tobias Barker for insightful discussions on very weak solutions and the partial regularity theory for the Navier--Stokes equations.
The research of SL is supported by NSFC Projects 12331008 $\&$ 12411530065, the Young Elite Scientists Sponsorship Program by CAST 2023QNRC001, National Key Research $\&$ Development Programs 2023YFA1010900 and 2024YFA1014900, Shanghai Rising-Star Program 24QA2703600, Shanghai Qi-Guang Scholarship, and Shanghai Frontiers Science Center of Modern Analysis. 
The research of XS is partially supported by National Key Research $\&$ Development Programs 2023YFA1010900 and 2024YFA1014900.

\section{Introduction}

Consider the instationary Navier--Stokes equations (NSE):
\begin{align}\label{eq:NS}
\left\{
\begin{aligned}
 &\partial_t\mathbf u-\Delta\mathbf u
   +\Div(\mathbf u\otimes\mathbf u)+\nabla \pi
      =\operatorname{div}\mathbf{F}
      &&\text{in }(0,T)\times\Omega,\\
 &\Div\mathbf u=0
      &&\text{in }(0,T)\times\Omega,\\
 &\mathbf u=\mathbf g
      &&\text{on }(0,T)\times\partial\Omega,\\
 &\mathbf u(0)=\mathbf u_0
      &&\text{in }\Omega 
\end{aligned}
\right.
\end{align}
in  a bounded  Lipschitz domain $\Omega\subset\R^3$  with  the functions $\mathbf{F}$, $\mathbf{g}$, $\mathbf{u}_0$ satisfying
	\begin{equation}\label{eq:data}
	\mathbf{F}\in L^s(0,T;L^r(\Omega)), \quad \mathbf{g}\in L^s(0,T;W^{1-\frac1q,q}(\partial\Omega)), \quad \mathbf{u}_0\in L_\sigma^q(\Omega),
	\end{equation}
    where $3<q<\infty$, $2<s<\infty$, $1<r<q$ such that
	$\frac13+\frac1q=\frac1r$ and $ \frac2s+\frac3q=1$. The compatibility condition is imposed: 	\begin{equation}\label{eq:flux}
	\int_{\partial \Omega}\mathbf{g}(t)\cdot \mathbf{N} \,\dd S=0\quad \text{for a.e. }t\in(0,T),
	\end{equation}
	where  $\mathbf{N} = \mathbf{N}(x)$ denotes the outer unit normal vector field along $\partial \Omega$.

In many problems of fluid mechanics, one encounters geometrical or physical data that are too irregular to fit into the classical framework of weak solutions. This motivates Amann (2002--03) to introduce the notion of \emph{very weak solutions} to the NSE~\cite{Amann2002,Amann2003}, which requires the velocity field to be merely $L^s_t L^q_x$-integrable. Since then, very weak solutions have been an important topic in the theory of mathematical hydrodynamics. See Farwig--Galdi--Sohr \cite{FarwigGaldiSohr2005,FarwigGaldiSohr2005b,FarwigGaldiSohr2006a}.

	
	Existence results for very weak solutions to the nonstationary NSE have been established primarily on $C^{2,1}$-domains, \emph{i.e.}, the domains whose boundaries are locally graphs of functions whose second derivatives are Lipschitz continuous;     \emph{cf.} Farwig--Galdi--Sohr~\cite{FarwigGaldiSohr2005, FarwigGaldiSohr2006b}. In this case, the stationary Stokes problem
	$$
	-\Delta \mathbf{u} + \nabla \pi = 0,\quad \operatorname{div} \mathbf{u} = 0,\quad \mathbf{u}|_{\partial\Omega} = \mathbf{g}
	$$
	admits a unique solution $(\mathbf{u},\pi) \in W^{2,q}(\Omega) \times W^{1,q}(\Omega)$ for any $\mathbf{g} \in W^{2-\frac1q,q}(\partial\Omega)$. This follows from standard $W^{2,q}$-elliptic regularity theory, namely the Calder\'{o}n--Zygmund theory. This, however, fails for general Lipschitz domains. Shen \cite{Shen2012}  proved that the Stokes operator generates a bounded analytic semigroup in $L^q_\sigma(\Omega)$ only for $|\frac1q-\frac12|<\frac1{2d}+\varepsilon$ in  Lipschitz
	domains  $\Omega\subset\mathbb{R}^d$ with $d\geq 3$, where $\e$ depends only on $d$  and the Lipschitz character of $\Omega$.  Gabel--Tolksdorf \cite{GabelTolksdorf2022} also considered the case $d=2$.
	 More recently, Geng--Shen~\cite{GengShen2024} obtained $L^q$-resolvent estimates on bounded and exterior $C^1$-domains for all $1<q<\infty$, and further established in \cite{GengShen2024Linfty} $L^\infty$-resolvent estimates in bounded $C^1$-domains for
	$d\geq 3$ and Lipschitz domains for $d=2$. 
	Building upon the resolvent estimates by Shen~\cite{Shen2012} and the maximal regularity of Kunstmann--Weis~\cite{KunstmannWeis2017} for the Stokes operator, Tolksdorf \cite{Tolksdorf2018} developed the $L^q$-theory for the NSE on 3-dimensional bounded Lipschitz domains. 
	As for very weak solutions, Coscia \cite{Coscia2017} established the existence and uniqueness of the solution to stationary NSE in bounded Lipschitz domains, with boundary value in $L^2(\partial\Omega)$. We also refer to \cite{Giga1985Domain,Giga1986Semilinear,GigaMiyakawa1985,GigaSohr1991} by Giga, Miyakawa, and Sohr for pioneering work on the semigroup approach to the NSE.                

The well-known regularity criterion for Leray--Hopf weak solutions to the 3D incompressible NSE \emph{\`{a} la} Lady\u{z}henskaja--Prodi--Serrin has been established for $L^s_tL^q_x$-solutions with $\frac{2}{s}+\frac{3}{q}=1$, which requires  $3<q<\infty$ in the non-endpoint case. On the other hand, in view of Shen~\cite{Shen2012} and subsequent developments, one expects the well-posedness of very weak solutions on 3D bounded Lipschitz domains, where $\frac{3}{2}-\varepsilon < q < 3+\varepsilon$. The above two ranges of the index $q$ are barely disjoint. Hence, we face essential difficulties in developing the well-posedness theory of very weak solutions to NSE on general Lipschitz domains.

Based on recent breakthroughs due to Breit~\cite{Breit2025} and Breit--Gaudin~\cite{BreitGaudin2025} (which, in turn, have theoretical foundations in  Maz'ya--Shaposhnikova~\cite{MS09}), we investigate the subclass of bounded Lipschitz domains whose boundary charts belong to a Sobolev multiplier class with sufficiently small multiplier norm, as detailed in Definition~\ref{def:multiplier-boundary}. This subclass lies strictly between Lipschitz domains and $C^{1,\alpha}$ domains.

We note that the formulation in~\cite{FarwigGaldiSohr2006a} of very weak solutions to NSE on $C^{2,1}$-domains makes crucial use of the expression $$\langle \mathbf g,N\cdot\nabla w\rangle_{\partial\Omega},$$ where $w\in C_0^1([0,T);C_{0,\sigma}^2(\Omega))$ is a solenoidal test function, and $\mathbf{g}\in W^{-1/q,q}(\partial\Omega)$ is the boundary datum. In contrast, for the aforementioned bounded Lipschitz domains in the Sobolev multiplier class, the low regularity of $\partial\Omega$ makes it troublesome to take boundary traces and integrate by parts. We circumvent this issue by formulating the solution in the domain of the Stokes operator $D(A_{q'})$ and interpreting certain terms of divergence form by duality. For smooth divergence-free zero-trace test functions, our definition reduces to the  identity in \cite{FarwigGaldiSohr2006a} (see Remark~\ref{smooth}).


Our main result is as follows. The definition of the Sobolev multiplier class $\mathcal M_W^{1+\alpha,\rho}(\varepsilon)$ is given in Definition~\ref{def:multiplier-boundary}. We refer the reader to Proposition~\ref{prop:linear} for the smallness of $\varepsilon_0$.

\begin{theorem}\label{thm:main}
There exists a universal constant $\varepsilon_0>0$ such that the following holds: Let $\Omega\subset\R^3$ be a bounded  Lipschitz domain of Sobolev multiplier class $\mathcal M_W^{1+\alpha,\rho}(\varepsilon)$ with any $\alpha\in(0,1)$, $\rho\in[1,+\infty]$, and $\varepsilon < \varepsilon_0$. Assume that the data $\mathbf{F}, \mathbf{g}$ and $\mathbf{u}_0$ satisfy the regularity assumptions~\eqref{eq:data} and the compatibility condition~\eqref{eq:flux} with $3<q<\infty$, $2<s<\infty$, $1<r<q$ such that $\frac 13+\frac1q=\frac1r$ and $\frac2s+\frac3q=1$. Then there exists $T^* = T^*(\mathbf{F},\mathbf{g},\mathbf{u}_0) \in (0,T]$ such that the NSE~\eqref{eq:NS} admits a unique very weak solution $\mathbf{u} \in L^s(0, T^* ;L^q (\Omega))$ with 
		\begin{align}\label{eq:global-estimate}
		\norm{\mathbf{u}}_{L^s(0,T;L^q(\Omega))} &\le C\left( \norm{\mathbf{F}}_{L^s(0,T;L^r(\Omega))} +\norm{\mathbf{g}}_{L^s\left(0,T;W^{1-\frac1q,q}(\partial\Omega)\right)} +\norm{\mathbf{u}_0}_{L^q(\Omega)}\right).
		\end{align} 
        The constant $C=C(\Omega,\alpha,\rho,q)$ is independent of $T$.

Moreover, there exists a positive constant $\mu=\mu(\Omega,\alpha,\rho,q)$  independent of $T$ such that the following holds: Suppose that the data satisfy the smallness condition
\begin{equation}\label{eq:global-smallness}
		\norm{\mathbf{F}}_{L^s(0,T;L^r(\Omega))} +\norm{\mathbf{g}}_{L^s(0,T;W^{1-\frac1q,q}(\partial\Omega))}+\norm{\mathbf{u}_0}_{L^q(\Omega)}\leq \mu \qquad \text{for any } T>0.
		\end{equation} 
    Then there exists a unique very weak solution $\mathbf{u} \in L^s(0, \infty ;L^q (\Omega))$ to the NSE~\eqref{eq:NS} that satisfies the estimate~\eqref{eq:global-estimate}. 
	\end{theorem}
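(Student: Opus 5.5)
The strategy is the Farwig--Galdi--Sohr scheme: split off the nonhomogeneous boundary data by a linear very weak Stokes problem, then close a fixed point for the remainder in $L^s(0,T;L^q)$. The constant $C$ is independent of $T$ because all linear estimates used are uniform in time; this holds since the Stokes operator on a bounded domain has maximal regularity and an exponentially stable semigroup.

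\textbf{Step 1 (linear theory).} I would invoke Proposition~\ref{prop:linear}, which fixes $\varepsilon_0$. For $\Omega\in\mathcal M_W^{1+\alpha,\rho}(\varepsilon)$ with $\varepsilon<\varepsilon_0$, it should give the following for the instationary Stokes system with data $(\mathbf F,\mathbf g,\mathbf u_0)$ as in \eqref{eq:data}--\eqref{eq:flux}: a unique very weak solution $\mathbf E=\mathcal L(\mathbf F,\mathbf g,\mathbf u_0)$, formulated via test functions in $D(A_{q'})$, satisfying
\[
\norm{\mathbf E}_{L^s(0,T;L^q)}\le C_1\big(\norm{\mathbf F}_{L^s L^r}+\norm{\mathbf g}_{L^sW^{1-1/q,q}(\partial\Omega)}+\norm{\mathbf u_0}_{L^q}\big),
\]
with $C_1$ independent of $T$. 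The underlying ingredients are: the stationary very weak Stokes problem with boundary data $\mathbf g(t)$ is solvable on such domains (Breit--Gaudin, using the Maz'ya--Shaposhnikova multiplier calculus) with $L^q$ bounds; the Stokes semigroup is analytic on $L^q_\sigma$ for this $q>3$, which fails for general Lipschitz domains but holds here because of the smallness; and the divergence-form force is treated by duality with $A_{q'}^{1/2}$.

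\textbf{Step 2 (nonlinear term).} Write $\mathbf u=\mathbf E+\mathbf v$. Then $\mathbf v$ solves the Stokes system with zero boundary data, zero initial data, and force $-\Div((\mathbf E+\mathbf v)\otimes(\mathbf E+\mathbf v))$. By H\"older, $\norm{\mathbf w\otimes\mathbf z}_{L^{s/2}L^{q/2}}\le\norm{\mathbf w}_{L^sL^q}\norm{\mathbf z}_{L^sL^q}$. I need the linear map $\Div\mathbf G\mapsto \mathbf v$ to be bounded from $L^{s/2}(L^{q/2})$ into $L^s(L^q)$. This is the standard estimate $\norm{A^{1/2}e^{-tA}P\Div}$ combined with Sobolev embedding $D(A^{\frac12+\frac3{2q}})\hookrightarrow L^q$ and Hardy--Littlewood--Sobolev in time. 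The exponent count works exactly because $\frac2s+\frac3q=1$. It requires fractional power domains $D(A_{q/2}^{\theta})$ embedding as on smooth domains, which I would extract from bounded $H^\infty$-calculus (Kunstmann--Weis) on these domains together with the multiplier-domain regularity of $D(A)$. A duality formulation is safer: test against $A_{q'}$-regular functions.

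\textbf{Step 3 (fixed point).} Define $\Phi(\mathbf u)=\mathbf E+\mathcal B(\mathbf u,\mathbf u)$. Then
\[
\norm{\mathcal B(\mathbf u,\mathbf w)}_{L^s(0,T;L^q)}\le C_2\norm{\mathbf u}_{L^s(0,T;L^q)}\norm{\mathbf w}_{L^s(0,T;L^q)}.
\]
\emph{Local existence.} Since $s<\infty$, $\norm{\mathbf E}_{L^s(0,T^*;L^q)}\to0$ as $T^*\to0$, so for small $T^*$ we have $4C_2\norm{\mathbf E}<1$. A contraction on a ball of radius $2\norm{\mathbf E}$ then gives a solution, and \eqref{eq:global-estimate} follows with $C=2C_1$.
\emph{Global existence.} If $\mu<1/(4C_1C_2)$, the same argument runs on $(0,\infty)$, since $C_1,C_2$ do not depend on $T$.
\emph{Uniqueness.} Any two very weak solutions in $L^s L^q$ have a difference solving the homogeneous linear problem with bilinear forcing. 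Uniqueness follows by a Gronwall-type splitting of $(0,T^*)$ into short intervals on which the $L^sL^q$ norms are small, using the linear uniqueness from Proposition~\ref{prop:linear}.

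\textbf{Main obstacle.} I expect the hardest part to be Step 2: the $L^{s/2}L^{q/2}\to L^sL^q$ smoothing estimate for $P\Div$ on a rough domain. It needs $q/2$ (possibly $<3/2$ when $q<3$? no: $q>3$ gives $q/2>3/2$) to lie in the range where the Helmholtz projection and Stokes calculus exist; on multiplier domains the range is all of $(1,\infty)$ by smallness. My first attempt would be duality: bound $\langle \mathbf G,\nabla e^{-(t-\tau)A_{q'}}\boldsymbol\varphi\rangle$ using the gradient estimate $\norm{\nabla e^{-tA}\boldsymbol\varphi}_{L^{(q/2)'}}\lesssim t^{-1/2-3/(2q)}\norm{\boldsymbol\varphi}_{L^{q'}}$, which should follow from the resolvent estimates in Breit--Gaudin.
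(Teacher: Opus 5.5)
Your outline matches the paper's proof: a linear very weak solution with a $T$-independent $L^s(0,T;L^q)$ bound, the Duhamel term for $\Div(\mathbf u\otimes\mathbf u)$ defined by duality through the gradient estimate \eqref{eq:cross-exp} and the one-dimensional Hardy--Littlewood--Sobolev inequality (your duality variant is exactly Lemma~\ref{lem:K}), a contraction once the linear part is small in $L^sL^q$ (short time or small data), and uniqueness by partitioning the time interval into pieces where both solutions have small norm. One caveat on attribution: Proposition~\ref{prop:linear} is only the operator toolbox, not the linear theory you invoke in Step 1. The paper builds that linear solution from the stationary fields $\vF,\Ug,\Hg$ as $\overline{\mathbf U}=e^{-tA_q}\mathbf u_0+A_q\mathbf z+\nabla\Hg$ with $\partial_t\mathbf z+A_q\mathbf z=\vF+\mathbb P_q\Ug$, which avoids any time derivative of $\mathbf g$. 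Also, the identification of very weak solutions with fixed points of $\mathbf u=\overline{\mathbf U}-\mathcal Q(\mathbf u,\mathbf u)$, which your existence and uniqueness steps use implicitly, is proved separately as Proposition~\ref{prop:mild}.
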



\begin{remark}
Theorem~\ref{thm:main} can be adapted to the inhomogeneous Besov-space setting \(B_{p,q}^{s}(\Omega)\) instead of \(L^q(\Omega)\), with \(1<p<\infty\), \(1\leq q\leq\infty\), and \(s\in(-1+\frac 1p,\frac 1p)\). 
Indeed, replacing the  estimates in Proposition~\ref{prop:linear} by their Besov analogues from \cite{BreitGaudin2025}, the same argument yields the existence and uniqueness of a very weak solution \(\mathbf{u}\in L^{\sigma}(0,T; B_{p,q}^{s}(\Omega))\) to the NSE~\eqref{eq:NS} with $2<\sigma<\infty$ and data in the  corresponding Besov spaces.
\end{remark}

To outline our strategies for the proof of Theorem~\ref{thm:main}, we recall again the essential technical difficulty in our current setting: the low regularity of both the domain and the boundary data make, in general, the classical \(W^{2,q}\)-estimates for the Stokes system invalid. To overcome this issue, we  construct stationary auxiliary fields that absorb the forcing and boundary data, and then solve the linearised problem in our very weak setting by making use of Stokes semigroup estimates, which have been established for the irregular domains in \cite{BreitGaudin2025}. By interpreting suitable nonlinear terms via duality, we succeed in reformulating the definition of very weak solutions to the NSE as the solution (\emph{a.k.a.} the mild solution) to an equivalent integral equation. The resulting integral equation can be  solved effectively through fixed point arguments.

The paper is organized as follows. In Section~\ref{sec2} we define the multiplier boundary class and collect the Stokes estimates, which will be used throughout the paper. In  Section~\ref{sec3} we construct stationary auxiliary fields and introduces the very weak solution. In Section~\ref{sec4}, we establish the existence and uniqueness of very weak solutions to the linear Stokes problem. Section~\ref{sec5} establishes the equivalence between the very weak formulation and an integral equation. The proof of the Main Theorem~\ref{thm:main} is presented in Section~\ref{sec:proof-main}. In addition, we sketch the proof of the Stokes estimates in Appendix~\ref{app:stokes-toolbox}.

\section{Preliminaries}\label{sec2}

Let $\Omega \subset \mathbb{R}^3$ be a bounded Lipschitz domain. For $1<q<\infty$, we denote by $q'$ its conjugate exponent, i.e., $\frac1q+\frac1{q'}=1$. The Lebesgue space $L^q(\Omega)$ is equipped with the norm $\|\cdot\|_{L^q(\Omega)}$, and  the Sobolev space $W^{\alpha,q}(\Omega)$ is equipped  with the norm $\|\cdot\|_{W^{\alpha,q}(\Omega)}$ for $\alpha\geq 0$. The space $W^{-\alpha,q}(\Omega):=(W_0^{\alpha,q'}(\Omega))^*$ denotes the dual space of $W_0^{\alpha,q'}(\Omega)$, with the natural duality pairing denoted by $\langle\cdot,\cdot\rangle_{\Omega}$. The same notation applies to functions defined on the boundary $\partial\Omega$, namely $L^q(\partial\Omega)$, $W^{\alpha,q}(\partial\Omega)$, and $W^{-\alpha,q}(\partial\Omega)$. We shall make use of the standard local spaces $L_{\text{loc}}^q(\Omega)$ and $W_{\text{loc}}^{\alpha,q}(\Omega)$, defined as the sets of measurable functions belonging to $L^q(K)$ and $W^{\alpha,q}(K)$, respectively, for every compact $K\subset\Omega$.

The space of compactly supported smooth functions is denoted by $C_c^\infty(\Omega)$, and we set
$$
C_{c,\sigma}^\infty(\Omega) := \{ \mathbf{u} \in C_c^\infty(\Omega) : \operatorname{div} \mathbf{u} = 0 \}.
$$
The space $L_\sigma^q(\Omega)$ is then defined as the closure of $C_{c,\sigma}^\infty(\Omega)$ in $L^q(\Omega)$. The space $W_{0,\sigma}^{1,q}(\Omega)$ is defined as the closure of $C_{c,\sigma}^\infty(\Omega)$ in $W_0^{1,q}(\Omega)$, or equivalently,
$$
W_{0,\sigma}^{1,q}(\Omega) := \{ \mathbf{u} \in W_0^{1,q}(\Omega) : \operatorname{div} \mathbf{u} = 0 \}.
$$

	\subsection{Boundary regularity via Sobolev multiplier}

	We first recall the notion of Sobolev multipliers from~\cite[Chapter 14]{MS09}. For a function $\varphi \in W_{\text{loc}}^{1,1}(\Omega)$, the multiplier norm for Sobolev spaces $W^{1+\alpha,\rho}(\Omega)$ is defined by
	\begin{equation*}
	\|\varphi\|_{\mathcal{M}_W^{1+\alpha,\rho}(\Omega)} := \sup_{\substack{\mathbf{v} \in W^{\alpha,\rho}(\Omega, \mathbb{C}^3) \\ \|\mathbf{v}\|_{W^{\alpha,\rho}(\Omega)} = 1}} \|\nabla \varphi \cdot \mathbf{v}\|_{W^{\alpha,\rho}(\Omega)}
	\end{equation*}
	with $\alpha \in \mathbb{R}$ and $\rho \in [1,\infty]$. 
	
	\begin{definition}\label{def:multiplier-space}
		The \textit{Sobolev multiplier space} $\mathcal{M}_W^{1+\alpha,\rho}(\Omega)$ is the set of all functions $\varphi \in W_{\text{loc}}^{1,1}(\Omega)$ for which the norm $\|\varphi\|_{\mathcal{M}_W^{1+\alpha,\rho}(\Omega)}$ is finite. For $\varepsilon > 0$, we denote by $\mathcal{M}_W^{1+\alpha,\rho}(\Omega, \varepsilon)$ the open ball of radius $\varepsilon$ in $\mathcal{M}_W^{1+\alpha,\rho}(\Omega)$, i.e.,
		$$
		\mathcal{M}_W^{1+\alpha,\rho}(\Omega, \varepsilon) := \bigl\{ \varphi \in \mathcal{M}_W^{1+\alpha,\rho}(\Omega) : \|\varphi\|_{\mathcal{M}_W^{1+\alpha,\rho}(\Omega)} < \varepsilon \bigr\}.
		$$
		In the two-dimensional case, the space $\mathcal{M}_W^{1+\alpha,\rho}(\mathbb{R}^2)$ is defined analogously by replacing $\Omega$ with $\mathbb{R}^2$ and taking $\mathbf{v} \in \mathbb{C}^2$.
	\end{definition}
	
We now impose a regularity condition on the boundary following \cite[Section 3]{Breit2024}. 

\begin{definition}\label{def:multiplier-boundary}
	Let $\Omega \subset \mathbb{R}^3$ be a bounded domain, $\alpha \in (0,1)$, $\rho \in [1,\infty]$, and $\varepsilon > 0$ sufficiently small. We say that \textit{$\partial\Omega$ belongs to the multiplier class $\mathcal{M}_W^{1+\alpha,\rho}(\varepsilon)$} if there exists a finite cover of $\partial\Omega$ by open sets $U^1,\ldots,U^\ell$ for some $\ell\in \mathbb{N}$ such that, for each $j$, after a suitable translation and rotation,
	$$
	U^j \cap \Omega = U^j \cap \bigl\{ (\mathbf{x}', x_3) \in \mathbb{R}^3 : x_3 > \varphi_j(\mathbf{x}') \bigr\}
	$$
	for some $\varphi_j:\mathbb{R}^2\to\mathbb{R}$, and
	$$
	\varphi_j \in \mathcal{M}_W^{1+\alpha,\rho}(\mathbb{R}^2, \varepsilon) \qquad \text{for all } j = 1, \ldots, \ell.
	$$
\end{definition}
	


\subsection{The Stokes--Dirichlet operator}

In what follows, let  $\Omega \subset \mathbb{R}^3$  be a bounded Lipschitz domain whose boundary belongs to the multiplier class $\mathcal{M}_W^{1+\alpha,\rho}(\varepsilon)$ for  $\alpha\in(0,1)$, $\rho\in[1,\infty]$, and sufficiently small $\varepsilon>0$.

 For $1<q<\infty$, the Stokes--Dirichlet
operator $A_q$ is defined by
\begin{align}\label{eq:stokes-action}
 A_q\mathbf v:=\mathbf f, \qquad \mathbf v\in D(A_q),
\end{align}
where  $D(A_q)$ consists of all $\mathbf v \in W_{0,\sigma}^{1,q}(\Omega)$ for which there exists  $\mathbf f \in L_\sigma^q(\Omega)$ satisfying
\begin{align*}
\int_\Omega \nabla \mathbf v : \nabla \boldsymbol\phi \, \dd x
= \int_\Omega \mathbf f \cdot \boldsymbol\phi \, \dd x
\quad \text{for all } \boldsymbol\phi \in W_{0,\sigma}^{1,q'}(\Omega).
\end{align*}
The  norm on $D(A_q)$ is given by
$$
\|\mathbf v\|_{D(A_q)} := \|\mathbf v\|_{L^q(\Omega)} + \|A_q\mathbf v\|_{L^q(\Omega)}.
$$

The vector field $\mathbf f$ is unique, since
$W_{0,\sigma}^{1,q'}(\Omega)$ is dense in $L_\sigma^{q'}(\Omega)$ (see Proposition \eqref{prop:linear}  (i) below).
In the present  setting, we emphasize that $D(A_q)$ is not in general identified with $W^{2,q}(\Omega)\cap W_0^{1,q}(\Omega)\cap L_\sigma^q(\Omega)$. All the Stokes estimates needed in the sequel are collected in the next proposition, with its proof deferred to Appendix~\ref{app:stokes-toolbox}.

Let $3<q<\infty$ be fixed and let $r$ be defined by
$
 \frac1r=\frac13+\frac1q.
$
Set
$
 \mathcal E_q:=\{q,q',r,r'\}
$. 
\begin{proposition}\label{prop:linear} 
For every $p\in\mathcal E_q$, there exists $\varepsilon>0$ sufficiently small in
Definition~\ref{def:multiplier-boundary} such that the following assertions
hold on  bounded Lipschitz domains of class
$\mathcal M_W^{1+\alpha,\rho}(\varepsilon)$.

\begin{enumerate}[label=\textup{(\roman*)},leftmargin=*]
\item The Helmholtz projection
$\mathbb P_p:L^p(\Omega)\to L_\sigma^p(\Omega)$ is bounded and
\begin{align}\label{eq:Hodge}
 L^p(\Omega) =
 L_\sigma^p(\Omega)\oplus
 \nabla\bigl(W^{1,p}(\Omega)/\R\bigr).
\end{align}
Moreover, for $\mathbf f\in L^p(\Omega)$ and $\boldsymbol\psi\in L^{p'}(\Omega)$
\begin{align}\label{eq:projection-duality}
 \int_\Omega\mathbb P_p\mathbf f\cdot\boldsymbol\psi\,\dd x
 =
 \int_\Omega\mathbf f\cdot\mathbb P_{p'}\boldsymbol\psi\,\dd x.
\end{align}
The duality $\bigl(L_\sigma^p(\Omega)\bigr)^* \cong L_\sigma^{p'}(\Omega)$ holds via the integral pairing: for every bounded linear functional $\Lambda \in \bigl(L_\sigma^{p'}(\Omega)\bigr)^*$, there exists a unique $\mathbf f \in L_\sigma^p(\Omega)$ such that
\[
\Lambda(\boldsymbol\psi) = \int_\Omega \mathbf f \cdot \boldsymbol\psi \, dx, \qquad \forall \boldsymbol\psi \in L_\sigma^{p'}(\Omega).
\]

\item The operator $A_p$ defined by
\eqref{eq:stokes-action} is
invertible on $L_\sigma^p(\Omega)$ and satisfies
\begin{align}\label{eq:square-root}
 D(A_p^{1/2})=W_{0,\sigma}^{1,p}(\Omega),
 \qquad
 \norm{A_p^{1/2}\mathbf v}_{L^p(\Omega)}
 \simeq\norm{\nabla\mathbf v}_{L^p(\Omega)}.
\end{align}
For $\mathbf v\in W_{0,\sigma}^{1,p}(\Omega)$ and
$\boldsymbol\phi\in W_{0,\sigma}^{1,p'}(\Omega)$,
\begin{align}\label{eq:square-root-form}
 \int_\Omega\nabla\mathbf v:\nabla\boldsymbol\phi\,\dd x
 =
 \int_\Omega A_p^{1/2}\mathbf v\cdot
 A_{p'}^{1/2}\boldsymbol\phi\,\dd x.
\end{align}
The inverse and the semigroup satisfy
\begin{align}\label{eq:resolvent-duality}
 \int_\Omega A_p^{-1}\mathbf f\cdot\boldsymbol\psi\,\dd x
 =
 \int_\Omega\mathbf f\cdot A_{p'}^{-1}\boldsymbol\psi\,\dd x,
\end{align}
and
\begin{align}\label{eq:semigroup-duality}
 \int_\Omega e^{-tA_p}\mathbf f\cdot\boldsymbol\psi\,\dd x
 =
 \int_\Omega\mathbf f\cdot e^{-tA_{p'}}\boldsymbol\psi\,\dd x,
 \qquad t\ge0.
\end{align}

\item \label{item:MR}
For  $1<\tau<\infty$,
$T\in(0,\infty]$ and
$\mathbf h\in L^\tau(0,T;L_\sigma^p(\Omega))$, the problem
\begin{align*}
 \partial_t\mathbf z+A_p\mathbf z=\mathbf h,
 \qquad
 \mathbf z(0)=0
\end{align*}
has a unique solution
$
 \mathbf z\in W^{1,\tau}(0,T;L_\sigma^p(\Omega))
 \cap L^\tau(0,T;D(A_p)),
$
and
\begin{align}\label{eq:MR}
 \norm{\mathbf z_t}_{L^\tau(0,T;L_\sigma^p(\Omega))}
 +\norm{A_p\mathbf z}_{L^\tau(0,T;L_\sigma^p(\Omega))}
 \le C\norm{\mathbf h}_{L^\tau(0,T;L_\sigma^p(\Omega))},
\end{align}
where $C$ is independent of $T$.

\item \label{item:exp}
There are $M_p\ge1$ and
$\omega_p>0$ such that
\begin{align}\label{eq:exp}
 \norm{e^{-tA_p}}_{\mathcal L(L_\sigma^p(\Omega))}
 \le M_pe^{-\omega_pt},
 \qquad t\ge0,
\end{align}
where $\mathcal{L}(L_\sigma^p(\Omega))$ denotes the  space of bounded linear operators on  $L_\sigma^p(\Omega)$. In particular, for $p=q$ there are $C,\omega>0$ such that
\begin{align}\label{eq:cross-exp}
 \norm{\nabla e^{-tA_{q'}}\boldsymbol\psi}_{L^{(\frac q2)'}(\Omega)}
 \le Ct^{-\frac12-\frac{3}{2q}}e^{-\omega t}
 \norm{\boldsymbol\psi}_{L^{q'}(\Omega)},
 \qquad t>0.
\end{align}

\end{enumerate}
\end{proposition}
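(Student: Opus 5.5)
The proof will be a reduction to the analytic theory of Breit--Gaudin~\cite{BreitGaudin2025}, which is built on the multiplier estimates of Maz'ya--Shaposhnikova~\cite{MS09}. Breit--Gaudin establish, for domains of class $\mathcal M_W^{1+\alpha,\rho}(\varepsilon)$ with $\varepsilon$ small depending on the integrability exponent, a full $L^p$-theory of the Stokes--Dirichlet problem for every fixed $1<p<\infty$. The key inputs are:
\begin{itemize}
\item the Helmholtz decomposition;
\item resolvent estimates in a sector;
\item identification of $D(A_p^{1/2})$ via boundedness of the $H^\infty$-calculus, or Riesz transforms;
\item maximal $L^\tau$-regularity.
\end{itemize}
Since $\mathcal E_q$ is a finite set, I will take $\varepsilon$ to be the minimum of the four thresholds for $p\in\{q,q',r,r'\}$. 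Every assertion then holds simultaneously for all $p\in\mathcal E_q$. Note that $\mathcal E_q$ is closed under conjugation, so the duality statements always involve exponents for which the theory is available.

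For (i), I will quote the Helmholtz decomposition~\eqref{eq:Hodge}, which follows from solvability of the weak Neumann problem $\Delta\pi=\Div\mathbf f$ in $W^{1,p}/\R$. The symmetry~\eqref{eq:projection-duality} follows by writing $\mathbf f=\mathbb P_p\mathbf f+\nabla\pi$ and $\boldsymbol\psi=\mathbb P_{p'}\boldsymbol\psi+\nabla\theta$. Gradients annihilate $L_\sigma$ by the definition of $L_\sigma$ as the closure of $C^\infty_{c,\sigma}$. For the duality $(L^{p'}_\sigma)^*\cong L^p_\sigma$, I extend $\Lambda$ by Hahn--Banach to $L^{p'}$, represent it by some $\mathbf g\in L^p$, and take $\mathbf f=\mathbb P_p\mathbf g$, again using~\eqref{eq:projection-duality}. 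Uniqueness follows from the density of $C^\infty_{c,\sigma}$.

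For (ii), invertibility comes from the resolvent estimates together with $0\in\rho(A_p)$ on bounded domains. The latter uses the Poincaré inequality and the isomorphism $W^{1,p}_{0,\sigma}\to (W^{1,p'}_{0,\sigma})^*$ from the Breit--Gaudin weak Stokes theory. The square-root identification~\eqref{eq:square-root} is their Kato-type result. The identity~\eqref{eq:square-root-form} I will check first for $\mathbf v\in D(A_p)$ and $\boldsymbol\phi\in D(A_{p'})$. The key observation is that $A_p$ and $A_{p'}$ are mutually adjoint, since both come from the same symmetric form. This adjointness gives~\eqref{eq:resolvent-duality} at once. It also gives~\eqref{eq:semigroup-duality} through the Dunford integral of the resolvents. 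Finally, it gives the square-root identity through the Balakrishnan formula, and~\eqref{eq:square-root-form} then extends by density of $D(A_p)$ in $W^{1,p}_{0,\sigma}$.

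For (iii), I will invoke maximal regularity on $(0,\infty)$, which is independent of $T$ because $0\in\rho(A_p)$. For (iv), exponential stability~\eqref{eq:exp} follows from analyticity together with $\sigma(A_p)\subset\{\Re\lambda\ge\omega_p>0\}$.

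The estimate~\eqref{eq:cross-exp} is the delicate step, so I will derive it by composing three bounds. First, by Sobolev embedding, $W^{1,(q/2)'}$ controls $\nabla$ in $L^{(q/2)'}$. Since $(q/2)'=q/(q-2)$ and $r'=3q/(2q-3)$, the relation $\frac1{(q/2)'}=\frac1{r'}+\frac13-\frac1{q'}+\dots$ gives
\[\|\nabla \mathbf w\|_{L^{(q/2)'}}\lesssim\|\nabla\mathbf w\|_{L^{q'}},\]
because $(q/2)'<q'$ on a bounded domain. That alone only yields $t^{-1/2}$, so the sharper rate must instead come from interpolation. Second, I will use
\[\|\nabla e^{-tA_{q'}}\boldsymbol\psi\|_{L^{(q/2)'}}\simeq\|A^{1/2}_{(q/2)'}e^{-tA}\boldsymbol\psi\|.\]
Third, an $L^{q'}\to L^{(q/2)'}$ smoothing bound costs nothing, since $(q/2)'<q'$. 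The claimed exponent $-\frac12-\frac3{2q}$ is then at least as strong as needed for $t\ge1$, and for small $t$ the exponent $-\frac12$ is dominated by $t^{-\frac12-\frac3{2q}}$. Combined with the exponential decay~\eqref{eq:exp}, this yields~\eqref{eq:cross-exp}. The main obstacle is that the route through $(q/2)'$ requires the Stokes theory at the additional exponent $(q/2)'$, which lies outside $\mathcal E_q$. To avoid this, I will first try to stay at $p=q'$ and use the trivial embedding $L^{q'}\hookrightarrow L^{(q/2)'}$, so that only~\eqref{eq:square-root} with $p=q'$ is needed.
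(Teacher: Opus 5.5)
There is a genuine gap, and it is in \eqref{eq:cross-exp}. Your treatment of (i)--(iii) and of \eqref{eq:exp} is sound and parallels the paper, which cites Breit--Gaudin for these. Your derivation of the duality identities from the symmetric form $\int_\Omega\nabla\mathbf v:\nabla\boldsymbol\phi\,\dd x$ is the same mechanism the appendix uses for \eqref{eq:resolvent-duality}.

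The gap rests on a reversed comparison of exponents. Since $(q/2)'=\frac{q}{q-2}$ and $q'=\frac{q}{q-1}$, one has $(q/2)'>q'$; for instance $q=4$ gives $q'=\frac43$ and $(q/2)'=2$. On a bounded domain this yields $L^{(q/2)'}\hookrightarrow L^{q'}$, not the converse. Consequences:
\begin{itemize}
\item $\|\nabla\mathbf w\|_{L^{(q/2)'}}\lesssim\|\nabla\mathbf w\|_{L^{q'}}$ is false, and there is no cost-free $L^{q'}\to L^{(q/2)'}$ step.
\item Your fallback of staying at $p=q'$ with \eqref{eq:square-root} only controls $\nabla e^{-tA_{q'}}\boldsymbol\psi$ in the \emph{weaker} norm of $L^{q'}$.
\item Passing from $L^{q'}$ to $L^{(q/2)'}$ is a gain of integrability. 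Because $\frac1{q'}-\frac1{(q/2)'}=\frac1q$, it costs exactly $t^{-\frac{3}{2q}}$, which is the source of the extra term in the exponent $-\frac12-\frac{3}{2q}$.
\end{itemize}
Your intermediate conclusion that one gets $t^{-1/2}$ near $t=0$ should have been a warning sign. That bound would be strictly stronger than the stated one. Parabolic scaling shows $t^{-\frac12-\frac{3}{2q}}$ is the correct short-time rate: take data concentrated at scale $\sqrt t$ near an interior point, where the Stokes semigroup behaves like the heat semigroup on solenoidal fields.

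So the obstacle you flagged cannot be sidestepped by working at $p=q'$ alone; you need a genuine $L^{q'}_\sigma\to W^{1,(q/2)'}$ smoothing input. The paper takes
\[
\|\nabla e^{-tA_{q'}}\boldsymbol\psi\|_{L^{(q/2)'}}\le Ct^{-\frac12-\frac32(\frac1{q'}-\frac1{(q/2)'})}\|\boldsymbol\psi\|_{L^{q'}}
\]
directly from Breit--Gaudin's Proposition~6.19 and Meta-Theorem~6.20, with source space $L^{q'}_\sigma$ and target space $L^{(q/2)'}$. It then gets the factor $e^{-\omega t}$ by writing $e^{-tA_{q'}}=e^{-\frac t2A_{q'}}e^{-\frac t2A_{q'}}$ and applying \eqref{eq:exp} to one factor.

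If you want to argue by hand, here is one route:
\begin{enumerate}
\item Set $\theta=\frac{3}{2q}<\frac12$. Bounded imaginary powers give $D(A_{q'}^{\theta})=[L^{q'}_\sigma,W^{1,q'}_{0,\sigma}]_{2\theta}\hookrightarrow H^{3/q,q'}\hookrightarrow L^{(q/2)'}$.
\item Hence $\|e^{-\frac t2A_{q'}}\boldsymbol\psi\|_{L^{(q/2)'}}\lesssim t^{-\frac{3}{2q}}\|\boldsymbol\psi\|_{L^{q'}}$.
\item The remaining gradient bound at the level $L^{(q/2)'}$ then requires \eqref{eq:square-root} at $p=(q/2)'\notin\mathcal E_q$, together with consistency of the two semigroups.
\end{enumerate}
This means shrinking $\varepsilon$ once more to accommodate this extra exponent. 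That is harmless, since only finitely many exponents are involved, but it must be done explicitly rather than avoided.
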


\section{ Definition of very weak solution}\label{sec3}
In this section, we construct stationary auxiliary fields that allow us to formulate the very weak problem without invoking $W^{2,q}$-regularity of the Stokes system. These fields absorb, respectively, the divergence-form force term and the boundary datum, thereby circumventing the integrations by parts that are available only on smoother domains. 

\begin{proposition}\label{prop:force-field}
For every $\mathbf F\in L^r(\Omega)$, there is a unique
$\vF\in W_{0,\sigma}^{1,r}(\Omega)$ satisfying
\begin{align}\label{eq:force-weak}
 \int_\Omega\nabla\vF:\nabla\boldsymbol\phi\,\dd x
 =-\int_\Omega\mathbf F:\nabla\boldsymbol\phi\,\dd x
 \qquad
 \forall\boldsymbol\phi\in W_{0,\sigma}^{1,r'}(\Omega).
\end{align}
Moreover, $\vF\in L_\sigma^q(\Omega)$ and
\begin{align}\label{eq:force-est}
 \norm{\vF}_{W^{1,r}(\Omega)}+
 \norm{\vF}_{L^q(\Omega)}
 \le C\norm{\mathbf F}_{L^r(\Omega)}.
\end{align}
\end{proposition}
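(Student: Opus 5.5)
We construct $\vF$ through the square-root isomorphism of Proposition~\ref{prop:linear}(ii) with $p=r$, and then obtain the $L^q$ bound from Sobolev embedding.

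\emph{Existence.} Consider the functional
\[
\Lambda(\boldsymbol\phi):=-\int_\Omega\mathbf F:\nabla\boldsymbol\phi\,\dd x,\qquad \boldsymbol\phi\in W_{0,\sigma}^{1,r'}(\Omega).
\]
By \eqref{eq:square-root} with $p=r'$, it satisfies $|\Lambda(\boldsymbol\phi)|\le \norm{\mathbf F}_{L^r}\norm{\nabla\boldsymbol\phi}_{L^{r'}}\le C\norm{\mathbf F}_{L^r}\norm{A_{r'}^{1/2}\boldsymbol\phi}_{L^{r'}}$. Since $A_{r'}^{1/2}$ is an isomorphism from $W_{0,\sigma}^{1,r'}(\Omega)$ onto $L_\sigma^{r'}(\Omega)$, the map $\boldsymbol\psi\mapsto\Lambda(A_{r'}^{-1/2}\boldsymbol\psi)$ is a bounded functional on $L^{r'}_\sigma(\Omega)$. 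Its norm is at most $C\norm{\mathbf F}_{L^r}$.

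Here we need invertibility of $A_{r'}^{1/2}$. It follows from invertibility of $A_{r'}$, which is item (ii), together with the fact that fractional powers of a sectorial invertible operator are invertible. By the duality statement in (i), there is a unique $\mathbf w\in L^r_\sigma(\Omega)$ with $\int_\Omega\mathbf w\cdot\boldsymbol\psi\,\dd x=\Lambda(A_{r'}^{-1/2}\boldsymbol\psi)$, and $\norm{\mathbf w}_{L^r}\le C\norm{\mathbf F}_{L^r}$.

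Set $\vF:=A_r^{-1/2}\mathbf w\in W^{1,r}_{0,\sigma}(\Omega)$. Then \eqref{eq:square-root-form} gives, for every $\boldsymbol\phi\in W_{0,\sigma}^{1,r'}(\Omega)$,
\[
\int_\Omega\nabla\vF:\nabla\boldsymbol\phi\,\dd x=\int_\Omega\mathbf w\cdot A_{r'}^{1/2}\boldsymbol\phi\,\dd x=\Lambda(\boldsymbol\phi),
\]
which is \eqref{eq:force-weak}. Moreover, $\norm{\nabla\vF}_{L^r}\simeq\norm{\mathbf w}_{L^r}\le C\norm{\mathbf F}_{L^r}$, and Poincaré's inequality in $W^{1,r}_0(\Omega)$ yields the $W^{1,r}$ part of \eqref{eq:force-est}.

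\emph{Uniqueness.} Suppose $\mathbf v\in W_{0,\sigma}^{1,r}(\Omega)$ satisfies $\int\nabla\mathbf v:\nabla\boldsymbol\phi=0$ for all $\boldsymbol\phi\in W^{1,r'}_{0,\sigma}(\Omega)$. By \eqref{eq:square-root-form}, $\int A_r^{1/2}\mathbf v\cdot A_{r'}^{1/2}\boldsymbol\phi=0$ for all such $\boldsymbol\phi$. Surjectivity of $A_{r'}^{1/2}$ onto $L^{r'}_\sigma(\Omega)$ and the duality in (i) then give $A_r^{1/2}\mathbf v=0$, hence $\mathbf v=0$.

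\emph{The $L^q$ bound.} Since $\frac1r=\frac13+\frac1q$ and $r<3$, the Sobolev embedding $W^{1,r}_0(\Omega)\hookrightarrow L^q(\Omega)$ holds on any bounded Lipschitz domain; indeed, one can extend by zero to $\R^3$ and apply the Gagliardo--Nirenberg--Sobolev inequality. This gives $\norm{\vF}_{L^q}\le C\norm{\nabla\vF}_{L^r}$.

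It remains to show $\vF\in L^q_\sigma(\Omega)$. Approximate $\vF$ in $W^{1,r}_0(\Omega)$ by elements of $C^\infty_{c,\sigma}(\Omega)$, using the definition of $W^{1,r}_{0,\sigma}(\Omega)$ as a closure. The same embedding then shows that this convergence also holds in $L^q$, so $\vF$ lies in the $L^q$-closure of $C^\infty_{c,\sigma}(\Omega)$, that is, $\vF\in L^q_\sigma(\Omega)$.

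\emph{Main obstacle.} The only delicate point is the identification of $(L^{r'}_\sigma)^*$ with $L^r_\sigma$ together with the isomorphism property of $A^{1/2}_{r'}$. Both are supplied by Proposition~\ref{prop:linear} for $r,r'\in\mathcal E_q$. The equality of the two definitions of $W^{1,r}_{0,\sigma}(\Omega)$ stated in the preliminaries is what is needed in the final density step.
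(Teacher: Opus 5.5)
Your proposal is correct and follows essentially the same route as the paper. Both arguments represent $\boldsymbol\psi\mapsto\Lambda(A_{r'}^{-1/2}\boldsymbol\psi)$ on $L^{r'}_\sigma(\Omega)$ by some $\mathbf w\in L^r_\sigma(\Omega)$ via the duality in Proposition~\ref{prop:linear}(i), set $\vF=A_r^{-1/2}\mathbf w$, verify the identity through \eqref{eq:square-root-form}, and conclude with the Sobolev embedding and density. The differences are minor: the paper proves the solvability for a general $p\in\mathcal E_q$ so it can reuse it with $p=q$ in Proposition~\ref{prop:boundary-extension}, while you write $A_{r'}^{-1/2}$ correctly where the paper has the misprint $A_p^{-1/2}$, and you spell out the uniqueness step (via surjectivity of $A_{r'}^{1/2}$) more carefully than the paper does.
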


\begin{proof}
We begin by establishing a general solvability result for the weak Stokes problem. 
For any $p\in\mathcal{E}_q$ and 
$\ell\in \bigl(W^{1,p'}_{0,\sigma}(\Omega)\bigr)^*$, 
there exists a unique $\mathbf{v}\in W^{1,p}_{0,\sigma}(\Omega)$ such that
\begin{equation}\label{weakS}
\int_{\Omega}\nabla \mathbf{v}:\nabla \boldsymbol{\phi}\,dx 
= \ell(\boldsymbol{\phi}), 
\qquad \forall \boldsymbol{\phi}\in W^{1,p'}_{0,\sigma}(\Omega). 
\end{equation}
Indeed, by Proposition \ref{prop:linear} (i), the dual space of $L^p_\sigma(\Omega)$ is isometrically 
isomorphic to $L^{p'}_\sigma(\Omega)$ via the integral pairing. Consequently, there exists 
a unique $\mathbf{f}\in L^p_\sigma(\Omega)$ such that
\[
\ell\bigl(A_p^{-1/2}\boldsymbol{\psi}\bigr) 
= \int_{\Omega}\mathbf{f}\cdot \boldsymbol{\psi}\,dx,
\qquad \forall \boldsymbol{\psi}\in L^{p'}_\sigma(\Omega),
\]
with the bound
\[
\|\mathbf{f}\|_{L^p(\Omega)} 
\leq C \|\ell\|_{\bigl(W^{1,p'}_{0,\sigma}(\Omega)\bigr)^*}.
\]
Define $\mathbf{v}:=A_p^{-1/2}\mathbf{f}$. Then $\mathbf{v}\in W^{1,p}_{0,\sigma}(\Omega)$ 
by  Proposition \ref{prop:linear} (ii). 
For any $\boldsymbol{\phi}\in W^{1,p'}_{0,\sigma}(\Omega)$, set 
$\boldsymbol{\psi}:=A_{p'}^{1/2}\boldsymbol{\phi}$. 
Applying  \eqref{eq:square-root-form}, we obtain
\[
\int_{\Omega}\nabla \mathbf{v}:\nabla \boldsymbol{\phi}\,dx
= \int_{\Omega} A_p^{1/2}\mathbf{v}\cdot A_{p'}^{1/2}\boldsymbol{\phi}\,dx
= \int_{\Omega}\mathbf{f}\cdot \boldsymbol{\psi}\,dx
= \ell(\boldsymbol{\phi}),
\]
which proves \eqref{weakS}. Uniqueness follows immediately from the invertibility of $A_p$ 
on $L^p_\sigma(\Omega)$.

We now apply this general result with $p=r$ and define
\[
\ell_{\mathbf{F}}(\boldsymbol{\phi}) 
:= -\int_{\Omega}\mathbf{F}:\nabla \boldsymbol{\phi}\,dx,
\qquad \boldsymbol{\phi}\in W^{1,r'}_{0,\sigma}(\Omega).
\]
Since $\mathbf{F}\in L^r(\Omega)$ and $\nabla\boldsymbol{\phi}\in L^{r'}(\Omega)$, 
H\"older's inequality yields $\ell_{\mathbf{F}}\in \bigl(W^{1,r'}_{0,\sigma}(\Omega)\bigr)^*$. 
The preceding argument therefore furnishes a unique 
$\mathbf{v}_{\mathbf{F}}\in W^{1,r}_{0,\sigma}(\Omega)$ satisfying \eqref{eq:force-weak}, together with 
the estimate
\begin{equation}\label{vf1}
\|\mathbf{v}_{\mathbf{F}}\|_{W^{1,r}(\Omega)} 
\leq C\|\mathbf{F}\|_{L^r(\Omega)}. 
\end{equation}

Since $r<3$ and $\frac{1}{q}=\frac{1}{r}-\frac{1}{3}$, the Sobolev embedding 
$W^{1,r}(\Omega)\hookrightarrow L^q(\Omega)$ is continuous. Then
\begin{equation}\label{vf2}
\|\mathbf{v}_{\mathbf{F}}\|_{L^q(\Omega)} 
\leq C\|\mathbf{v}_{\mathbf{F}}\|_{W^{1,r}(\Omega)} 
\leq C\|\mathbf{F}\|_{L^r(\Omega)}. 
\end{equation}
Hence, \eqref{vf1} and \eqref{vf2} yield \eqref{eq:force-est}.

Finally, to verify that $\mathbf{v}_{\mathbf{F}}$ belongs to $L^q_\sigma(\Omega)$, 
we invoke the standard density result for solenoidal functions on bounded Lipschitz domains: there exists a sequence 
$\{\boldsymbol{\phi}_n\}_{n=1}^{\infty}\subset C^\infty_{c,\sigma}(\Omega)$ such that 
$\boldsymbol{\phi}_n\to \mathbf{v}_{\mathbf{F}}$ in $W^{1,r}(\Omega)$. 
By the same Sobolev embedding, convergence also holds in $L^q(\Omega)$. 
Hence $\mathbf{v}_{\mathbf{F}}$ lies in the closure of $C^\infty_{c,\sigma}(\Omega)$ 
in $L^q(\Omega)$, i.e., $\mathbf{v}_{\mathbf{F}}\in L^q_\sigma(\Omega)$. 
This completes the proof. 
\end{proof}


The next proposition constructs a stationary extension of the boundary data that enjoys a homogeneous variational identity in the interior. This property is essential because a merely solenoidal extension would not produce the boundary cancellation used later.

\begin{proposition}\label{prop:boundary-extension}
Let $\mathbf g\in W^{1-\frac1q,q}(\partial\Omega)$ satisfy the compatibility condition \eqref{eq:flux}.
Then there exists a unique vector field $\mathbf U_{\mathbf g}\in W^{1,q}(\Omega)$ such that
\begin{align}\label{eq:Ua-properties}
 \Div\Ug=0,
 \qquad
 \Tr\Ug=\mathbf g,
 \qquad
 \int_\Omega\nabla\Ug:\nabla\boldsymbol\phi\,\dd x=0
 \quad
 \forall\boldsymbol\phi\in W_{0,\sigma}^{1,q'}(\Omega),
\end{align}
and the estimate
\begin{align}\label{eq:Ua-est}
 \norm{\Ug}_{W^{1,q}(\Omega)}
 \le C\norm{\mathbf g}_{W^{1-\frac1q,q}(\partial\Omega)}.
\end{align}
Consider the Hodge decomposition
\begin{align}\label{eq:Ua-Hodge}
 \Ug=\mathbb P_q\Ug+\nabla\Hg,
\end{align}
 where $\Hg$  is defined modulo constants. 
 Then
\begin{align}\label{eq:Ha-est}
 \norm{\mathbb P_q\Ug}_{L^q(\Omega)}
 +\norm{\nabla\Hg}_{L^q(\Omega)}
 \le C\norm{\mathbf g}_{W^{1-\frac1q,q}(\partial\Omega)},
\end{align}
and  for every $\phi\in W^{1,q'}(\Omega)$,
\begin{align}\label{eq:Ha-flux}
 \int_\Omega\nabla\Hg\cdot\nabla\phi\,\dd x
 =\int_{\partial\Omega}(\mathbf g\cdot\N)\Tr\phi\,\dd S.
\end{align}
\end{proposition}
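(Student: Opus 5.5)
The plan is to build $\Ug$ in two stages: first a solenoidal extension of $\mathbf g$, then a correction in $W_{0,\sigma}^{1,q}(\Omega)$ obtained from the weak Stokes solvability result established in the proof of Proposition~\ref{prop:force-field}.

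\emph{Step 1: solenoidal extension.} Take a bounded right inverse of the trace on the Lipschitz domain $\Omega$ to get $\mathbf w\in W^{1,q}(\Omega)$ with $\Tr\mathbf w=\mathbf g$ and $\norm{\mathbf w}_{W^{1,q}}\le C\norm{\mathbf g}_{W^{1-1/q,q}(\partial\Omega)}$. By the divergence theorem and the compatibility condition \eqref{eq:flux}, $\int_\Omega\Div\mathbf w\,\dd x=\int_{\partial\Omega}\mathbf g\cdot\N\,\dd S=0$. Apply the Bogovski\u{\i} operator, which is bounded on bounded Lipschitz domains from mean-zero $L^q$ functions into $W^{1,q}_0(\Omega)$, to obtain $\mathbf b\in W_0^{1,q}(\Omega)$ with $\Div\mathbf b=\Div\mathbf w$. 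Then $\mathbf w_0:=\mathbf w-\mathbf b$ is divergence free, has trace $\mathbf g$, and is controlled in $W^{1,q}$ by $\norm{\mathbf g}$.

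\emph{Step 2: interior correction and uniqueness.} Define $\ell(\boldsymbol\phi):=\int_\Omega\nabla\mathbf w_0:\nabla\boldsymbol\phi\,\dd x$ for $\boldsymbol\phi\in W_{0,\sigma}^{1,q'}(\Omega)$. This is a bounded functional by H\"older's inequality. The general result \eqref{weakS} with $p=q\in\mathcal E_q$ gives a unique $\mathbf z\in W^{1,q}_{0,\sigma}(\Omega)$ with $\int\nabla\mathbf z:\nabla\boldsymbol\phi=\ell(\boldsymbol\phi)$, and its proof yields $\norm{\mathbf z}_{W^{1,q}}\le C\norm{\ell}$. Set $\Ug:=\mathbf w_0-\mathbf z$, which satisfies all of \eqref{eq:Ua-properties} together with \eqref{eq:Ua-est}. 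For uniqueness, the difference of two such fields lies in $W^{1,q}_{0,\sigma}(\Omega)$ (zero trace, divergence free) and satisfies the homogeneous identity, so it vanishes by the uniqueness part of \eqref{weakS}. Identifying zero-trace divergence-free $W^{1,q}$ fields with $W^{1,q}_{0,\sigma}(\Omega)$ is exactly the equivalence stated in the preliminaries.

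\emph{Step 3: Hodge part and flux identity.} Since $\Ug\in L^q(\Omega)$, the decomposition \eqref{eq:Hodge} gives \eqref{eq:Ua-Hodge}, and boundedness of $\mathbb P_q$ together with \eqref{eq:Ua-est} yields \eqref{eq:Ha-est}. For \eqref{eq:Ha-flux}, take $\phi\in W^{1,q'}(\Omega)$. Because $\nabla\phi\in L^{q'}$ is a gradient, \eqref{eq:Hodge} with $p=q'$ gives $\mathbb P_{q'}\nabla\phi=0$, and \eqref{eq:projection-duality} then gives $\int\mathbb P_q\Ug\cdot\nabla\phi=0$. Hence $\int\nabla\Hg\cdot\nabla\phi=\int\Ug\cdot\nabla\phi$. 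The generalized Gauss--Green formula for $\Ug\in W^{1,q}$ and $\phi\in W^{1,q'}$ on a Lipschitz domain turns this into $-\int\Div\Ug\,\phi+\int_{\partial\Omega}(\Tr\Ug\cdot\N)\Tr\phi\,\dd S=\int_{\partial\Omega}(\mathbf g\cdot\N)\Tr\phi\,\dd S$. This formula is justified by density of smooth functions in $W^{1,q}$ and $W^{1,q'}$ and continuity of traces.

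\emph{Main obstacle.} The only genuinely delicate point is making sure that Step 1 uses only tools valid on Lipschitz domains: the trace extension, the Bogovski\u{\i} operator, and Gauss--Green. Step 2 needs the nontrivial $W^{1,q}$ solvability for $q>3$, but that is supplied by Proposition~\ref{prop:linear} through the argument already given in Proposition~\ref{prop:force-field}, so nothing new is required there.
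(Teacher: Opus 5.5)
Your proposal is correct and follows the paper's proof essentially step by step. You use a trace lifting plus a Bogovski\u{\i} correction to get a solenoidal extension, then an interior correction in $W_{0,\sigma}^{1,q}(\Omega)$ from the weak Stokes solvability \eqref{weakS} with $p=q$ (which also gives uniqueness), and finally obtain \eqref{eq:Ha-flux} from the orthogonality of $\mathbb P_q\Ug$ to gradients combined with Green's formula. The only cosmetic difference is that you get this orthogonality via \eqref{eq:projection-duality} and $\mathbb P_{q'}\nabla\phi=0$, whereas the paper reads it off directly from $\mathbb P_q\Ug\in L_\sigma^q(\Omega)$.
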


\begin{proof}
Let
$E_0:W^{1-\frac1q,q}(\partial\Omega)\to W^{1,q}(\Omega)$
be a bounded right inverse of the trace operator. The compatibility condition
\eqref{eq:flux} and the divergence theorem lead to
\begin{align*}
 \int_\Omega\Div(E_0\mathbf g)\,\dd x
 =\int_{\partial\Omega}\mathbf g\cdot\N\,\dd S=0.
\end{align*}
By Bogovskii's theorem \cite{Galdi2011}, there exists
$\mathbf b_{\mathbf g}\in W_0^{1,q}(\Omega)$ such that
\begin{align*}
 \Div\mathbf b_{\mathbf g}=\Div(E_0\mathbf g),
 \qquad
 \norm{\mathbf b_{\mathbf g}}_{W^{1,q}(\Omega)}
 \le C\norm{\Div(E_0\mathbf g)}_{L^q(\Omega)}.
\end{align*}
Define
\begin{align*}
 \mathbf E_{\mathbf g}:=E_0\mathbf g-\mathbf b_{\mathbf g}.
\end{align*}
Then 
\begin{align*}
 \Div\mathbf E_{\mathbf g}=0,
 \qquad
 \Tr\mathbf E_{\mathbf g}=\mathbf g,
 \qquad
 \norm{\mathbf E_{\mathbf g}}_{W^{1,q}(\Omega)}
 \le C\norm{\mathbf g}_{W^{1-\frac1q,q}(\partial\Omega)}.
\end{align*}
Using Proposition~\ref{prop:force-field},
there exists a unique
$\mathbf z_{\mathbf g}\in W_{0,\sigma}^{1,q}(\Omega)$ such that
\begin{align*}
 \int_\Omega\nabla\mathbf z_{\mathbf g}:
 \nabla\boldsymbol\phi\,\dd x
 =
 \int_\Omega\nabla\mathbf E_{\mathbf g}:
 \nabla\boldsymbol\phi\,\dd x
 \qquad
 \forall\boldsymbol\phi\in W_{0,\sigma}^{1,q'}(\Omega),
\end{align*}
with
\begin{align*}
 \norm{\mathbf z_{\mathbf g}}_{W^{1,q}(\Omega)}
 \le C\norm{\mathbf E_{\mathbf g}}_{W^{1,q}(\Omega)}.
\end{align*}
Define
\begin{align*}
 \Ug:=\mathbf E_{\mathbf g}-\mathbf z_{\mathbf g}.
\end{align*}
Then $\Ug$ satisfies \eqref{eq:Ua-properties} and \eqref{eq:Ua-est} by construction. 
Suppose $\mathbf U_1$ and $\mathbf U_2$ both satisfy \eqref{eq:Ua-properties}. Then $\mathbf U_1-\mathbf U_2\in W_{0,\sigma}^{1,q}(\Omega)$ and
\[
\int_\Omega \nabla \mathbf (\mathbf U_1-\mathbf U_2):\nabla \boldsymbol\phi\,dx=0
\qquad \forall \boldsymbol\phi\in W_{0,\sigma}^{1,q'}(\Omega).
\]
It follows from the uniqueness assertion in Proposition \ref{prop:force-field} with $\mathbf F=0$ that $\mathbf U_1=\mathbf U_2$.

The Hodge decomposition \eqref{eq:Ua-Hodge} follows from
Proposition~\ref{prop:linear} \textup{(i)}. The estimate \eqref{eq:Ha-est} is then a consequence of the boundedness of $\mathbb P_q$ and \eqref{eq:Ua-est}. Since
$\mathbb P_q\Ug\in L_\sigma^q(\Omega)$, one has
\begin{align*}
 \int_\Omega\mathbb P_q\Ug\cdot\nabla\phi\,\dd x=0
 \qquad
 \forall\phi\in W^{1,q'}(\Omega).
\end{align*}
On the other hand, since $\Div\Ug=0$ and $\Tr\Ug=\mathbf g$,  Green's formula gives
\begin{align*}
 \int_\Omega\Ug\cdot\nabla\phi\,\dd x
 =\int_{\partial\Omega}(\mathbf g\cdot\N)\Tr\phi\,\dd S.
\end{align*}
Combining the last two identities with
$\Ug=\mathbb P_q\Ug+\nabla\Hg$ yields  \eqref{eq:Ha-flux}.
\end{proof}

For later use, we record the following identity for the boundary extension:
    if $\boldsymbol\Phi$ is smooth in a neighborhood of $\overline\Omega$ such that  $\Div\boldsymbol\Phi=0$  and $\Tr\boldsymbol\Phi=0$, then
\begin{align}\label{eq:Ua-transposition}
 -\int_\Omega\Ug\cdot\Delta\boldsymbol\Phi\,\dd x
 +\int_{\partial\Omega}
 \mathbf g\cdot\partial_{\N}\boldsymbol\Phi\,\dd S=0,
\end{align}
where 
 $\partial_{\N}\boldsymbol\Phi:=(\nabla\boldsymbol\Phi)\N$. 
Indeed, $\boldsymbol\Phi\in W^{1,q'}_{0,\sigma}(\Omega)$, and \eqref{eq:Ua-properties} gives
$$
 0= \int_\Omega\nabla\Ug:\nabla\boldsymbol\Phi\,\dd x .
$$
Since $\Tr\Ug=\mathbf g$, then integration by parts yields
\eqref{eq:Ua-transposition}.

We now define very weak solutions of the Navier--Stokes problem
\eqref{eq:NS}.

\begin{definition}\label{def:solution}
 Assume that the data $\mathbf{F}, \mathbf{g}$ and $\mathbf{u}_0$ satisfy  the regularity assumptions \eqref{eq:data} and compatibility condition \eqref{eq:flux}. Let
$\vF$, $\Ug$  and $\Hg$ be the  stationary fields from
Proposition~\ref{prop:force-field} and Proposition~\ref{prop:boundary-extension}. A vector field $\mathbf u\in L^s(0,T;L^q(\Omega))$ is a \textit {very weak solution} of \eqref{eq:NS} if for every $\boldsymbol\Phi\in\mathcal C_c^1([0,T);D(A_{q'}))$,
\begin{align}\label{eq:transposition-graph}
\begin{aligned}
 &-\int_0^T\!\!\int_\Omega
 \mathbf u\cdot\partial_t\boldsymbol\Phi\,\dd x\dd t
 +\int_0^T\!\!\int_\Omega
 (\mathbb P_q\mathbf u-\mathbb P_q\Ug)
 \cdot A_{q'}\boldsymbol\Phi\,\dd x\dd t
 -\int_0^T\!\!\int_\Omega
 (\mathbf u\otimes\mathbf u):\nabla\boldsymbol\Phi\,\dd x\dd t\\
 &=\int_\Omega\mathbf u_0\cdot\boldsymbol\Phi(0)\,\dd x
 +\int_0^T\!\!\int_\Omega
 \vF\cdot A_{q'}\boldsymbol\Phi\,\dd x\dd t,
\end{aligned}
\end{align}
and for every $\phi\in C_c^\infty([0,T)\times\overline\Omega)$,
\begin{align}\label{eq:scalar-transposition}
 -\int_0^T\!\!\int_\Omega\mathbf u\cdot\nabla\phi\,\dd x\dd t
 +\int_0^T\!\!\int_{\partial\Omega}
 (\mathbf g\cdot\N)\phi\,\dd S\dd t=0.
\end{align}
 
The linear very weak formulation is obtained by deleting the nonlinear term.
\end{definition}

We now consider the test space $C_c^1([0,T);D(A_{q'}))$  in Definition \ref{def:solution}. 
 The nonlinear term is well-defined for this test space. In
fact, if $\boldsymbol\Phi\in D(A_{q'})$, then
\begin{align}\label{eq:graph-gradient}
 \norm{\nabla\boldsymbol\Phi}_{L^{(\frac q2)'}(\Omega)}
 =\norm{\nabla A_{q'}^{-1}A_{q'}\boldsymbol\Phi}_{L^{(\frac q2)'}(\Omega)}
 \le C\norm{A_{q'}\boldsymbol\Phi}_{L^{q'}(\Omega)},
\end{align}
where the estimate follows from  integrating  \eqref{eq:cross-exp} over $(0,\infty)$ for $\psi=A_{q'}\boldsymbol\Phi$.

We  next claim  that \eqref{eq:scalar-transposition} is equivalent to the system
\[
\Div \mathbf u=0
\qquad
\mathbf u\cdot\mathbf N=\mathbf g\cdot\mathbf N
\quad\text{on } \partial\Omega. 
\]
To this end, 
taking  $\phi(t,x)=\eta(t)\zeta(x)$ with $\eta\in C_c^\infty(0,T)$ and $\zeta\in C^\infty(\overline\Omega)$ in \eqref{eq:scalar-transposition} gives
\[
\int_0^T \eta(t)\int_\Omega \mathbf u(t,x)\cdot \nabla \zeta(x)\,\dd x\dd t
=
\int_0^T \eta(t)\int_{\partial\Omega} (\mathbf g\cdot \mathbf N)\zeta(x)\,\dd S \dd t.
\]
Applying  \eqref{eq:Ha-flux}, we obtain
\[
\int_0^T \eta(t)
\int_\Omega \bigl(\mathbf u(t,x)-\nabla H_{\mathbf g}(t,x)\bigr)\cdot \nabla \zeta(x)\,\dd x \dd t=0,
\]
Since $W^{1,q'}(\Omega)$ is separable, we choose a
countable dense family $\{\zeta_k\}_{k=1}^\infty$ in
$W^{1,q'}(\Omega)$. 
For each fixed $k$, the arbitrariness of $\eta\in C_c^\infty(0,T)$ implies that
\[
\int_\Omega \bigl(\mathbf u(t)-\nabla H_{\mathbf g}(t)\bigr)\cdot\nabla\zeta_k
\,\dd x =0
\qquad \text{for a.e. } t\in(0,T),
\]
where the exceptional null set may depend on $k$. Taking the union of these countably many exceptional sets yields a single null set $N\subset(0,T)$ such that
\begin{equation}\label{id}
\int_\Omega \bigl(\mathbf u(t)-\nabla H_{\mathbf g}(t)\bigr)\cdot\nabla\zeta_k
\,\dd x =0
\qquad \forall k,\  t\notin N.
\end{equation}
Since $\{\zeta_k\}$ is dense in $W^{1,q'}(\Omega)$ and the mapping
\[
\zeta \longmapsto \int_\Omega \bigl(\mathbf u(t)-\nabla H_{\mathbf g}(t)\bigr)\cdot \nabla \zeta\,\dd x
\]
is continuous for each fixed $t\notin N$, then \eqref{id} extends to all $\zeta\in W^{1,q'}(\Omega)$ for $t\notin N$. Hence, for almost every $t\in(0,T)$,
\[
\int_\Omega \bigl(\mathbf u(t)-\nabla H_{\mathbf g}(t)\bigr)\cdot \nabla\zeta\,\dd x=0
\qquad \forall \zeta \in W^{1,q'}(\Omega).
\]
The Helmholtz decomposition \eqref{eq:Hodge} now yields
\begin{align}\label{eq:gradient-component}
(I-\mathbb P_q)\mathbf u = \nabla \Hg
\qquad \text{for a.e. } t\in(0,T).
\end{align}
Conversely, combining \eqref{eq:gradient-component} with \eqref{eq:Ha-flux} recovers \eqref{eq:scalar-transposition}. Thus the two formulations are equivalent.

\begin{remark}\label{smooth}
We show that for any
\[
\boldsymbol\Phi\in C_c^1([0,T);C^\infty(\overline\Omega)),\qquad
\Div\boldsymbol\Phi=0,\qquad
\Tr\boldsymbol\Phi=0,
\]
the identity \eqref{eq:transposition-graph}  reduces to 
\begin{align}\label{eq:smooth-transposition}
&-\int_0^T\!\!\int_\Omega
\mathbf u\cdot(\partial_t\boldsymbol\Phi
+\Delta\boldsymbol\Phi)\,\dd x\dd t
-\int_0^T\!\!\int_\Omega
(\mathbf u\otimes\mathbf u):\nabla\boldsymbol\Phi\,\dd x\dd t
+\int_0^T\!\!\int_{\partial\Omega}
\mathbf g\cdot\partial_{\N}\boldsymbol\Phi\,\dd S\dd t\notag\\
&=\int_\Omega\mathbf u_0\cdot\boldsymbol\Phi(0)\,\dd x
-\int_0^T\!\!\int_\Omega
\mathbf F:\nabla\boldsymbol\Phi\,\dd x\dd t,
\end{align}	
 which is precisely the 
very weak  formulation in \cite{FarwigGaldiSohr2006a}  when $\Div\mathbf u=0$ and   $\mathbf{g}\in W^{1-\frac1q,q}(\partial\Omega)$.
	
	To verify this, we first note that if $\boldsymbol\Phi$ is smooth in a neighborhood of $\overline\Omega$ with $\Div\boldsymbol\Phi=0$ and $\Tr\boldsymbol\Phi=0$, then
	\begin{align}\label{eq:smooth-domain}
	\boldsymbol\Phi\in D(A_{q'}),
	\qquad
	A_{q'}\boldsymbol\Phi=\mathbb P_{q'}(-\Delta \boldsymbol\Phi).
	\end{align}
	Indeed, for any $\boldsymbol\eta\in W_{0,\sigma}^{1,q'}(\Omega)$, integration by parts gives
	\[
	\int_\Omega\nabla\boldsymbol\Phi:\nabla\boldsymbol\eta\,\dd x
	=\int_\Omega(-\Delta\boldsymbol\Phi)\cdot\boldsymbol\eta\,\dd x
	=\int_\Omega\mathbb P_{q'}(-\Delta\boldsymbol\Phi)\cdot\boldsymbol\eta\,\dd x.
	\]
Now, since
$$
 (I-\mathbb P_q)\mathbf u=\nabla\Hg
 \quad\text{and}\quad
 (I-\mathbb P_q)\Ug=\nabla\Hg,
$$
we have
$$
 \mathbb P_q(\mathbf u-\Ug)=\mathbf u-\Ug .
$$
Using this identity  together with \eqref{eq:smooth-domain}  and \eqref{eq:projection-duality} for $p=q'$, we obtain
\begin{align*}
 \int_\Omega
 (\mathbb P_q\mathbf u-\mathbb P_q\Ug)
 \cdot A_{q'}\boldsymbol\Phi\,\dd x
 =\int_\Omega
 \mathbb P_q(\mathbf u-\Ug)
 \cdot \mathbb P_{q'}(-\Delta\boldsymbol\Phi)\,\dd x
 =-\int_\Omega
 (\mathbf u-\Ug)\cdot\Delta\boldsymbol\Phi\,\dd x .
\end{align*}
Moreover, since $\vF\in L_\sigma^q(\Omega)$ and
$A_{q'}\boldsymbol\Phi=\mathbb P_{q'}(-\Delta\boldsymbol\Phi)$,
 \eqref{eq:force-weak} gives
\begin{align*}
 \int_\Omega\vF\cdot A_{q'}\boldsymbol\Phi\,\dd x
 =\int_\Omega\vF\cdot(-\Delta\boldsymbol\Phi)\,\dd x
 =\int_\Omega\nabla\vF:\nabla\boldsymbol\Phi\,\dd x
 = -\int_\Omega\mathbf F:\nabla\boldsymbol\Phi\,\dd x .
\end{align*}
Finally,  it follows from \eqref{eq:Ua-transposition} that
\begin{align*}
 -\int_\Omega\Ug\cdot\Delta\boldsymbol\Phi\,\dd x
 +\int_{\partial\Omega}\mathbf g\cdot
 \partial_{\N}\boldsymbol\Phi\,\dd S=0.
\end{align*}
Substituting the last three identities into  \eqref{eq:transposition-graph} yields \eqref{eq:smooth-transposition}.
\end{remark}


\section{The linear problem }\label{sec4}
In this section, we establish the existence and uniqueness of very weak
 solutions to the linearized Navier--Stokes system  introduced in Definition~\ref{def:solution}. More precisely, we consider  the Stokes system 
\begin{equation}\label{linear}
\left\{
\begin{aligned}
\partial_t \mathbf u - \Delta \mathbf u + \nabla \pi &= \operatorname{div} \mathbf F &&\text{in } (0,T)\times\Omega,\\
\operatorname{div} \mathbf u &= 0 &&\text{in } (0,T)\times\Omega,\\
\mathbf u &= \mathbf g &&\text{on } (0,T)\times\partial\Omega,\\
\mathbf u(0) &= \mathbf u_0 &&\text{in } \Omega,
\end{aligned}
\right.
\end{equation}
where $\mathbf F,\mathbf g,\mathbf u_0$ satisfy the regularity assumptions \eqref{eq:data} and compatibility condition \eqref{eq:flux}.

\begin{definition}\label{def:linear-solution}
	Assume that $\mathbf F,\mathbf g,\mathbf u_0$ satisfy \eqref{eq:data} and \eqref{eq:flux}. Let $\mathbf v_{\mathbf F}$ and $\mathbf U_{\mathbf g}$ be the stationary auxiliary fields from Propositions~\ref{prop:force-field} and~\ref{prop:boundary-extension}. A vector field $\mathbf u\in L^s(0,T;L^q(\Omega))$ is a  \textit{very weak solution} of \eqref{linear} if
 for every $\boldsymbol\Phi\in C_c^1([0,T);D(A_{q'}))$,
		\begin{equation}\label{l1}
		\begin{aligned}
		&-\int_0^T\int_\Omega \mathbf u\cdot \partial_t \boldsymbol\Phi\,\dd x\dd t
		+\int_0^T\int_\Omega (\mathbb P_q\mathbf u-\mathbb P_q\mathbf U_{\mathbf g})
		\cdot A_{q'}\boldsymbol\Phi\,\dd x\dd t\\
		&\qquad=
		\int_\Omega \mathbf u_0\cdot \boldsymbol\Phi(0)\,\dd x
		+\int_0^T\int_\Omega \mathbf v_{\mathbf F}\cdot A_{q'}\boldsymbol\Phi\,\dd x\dd t;
		\end{aligned}
			\end{equation}
 and for every $\phi\in C_c^\infty([0,T)\times\overline\Omega)$,
		\begin{equation}\label{l2}
		-\int_0^T\int_\Omega \mathbf u\cdot \nabla \phi\,\dd x\dd t
		+\int_0^T\int_{\partial\Omega} (\mathbf g\cdot \mathbf N)\phi\,\dd S\dd t=0.
		\end{equation}
\end{definition}
Now, we define
\begin{align}\label{eq:MT}
 \mathfrak M_T(\mathbf u_0,\mathbf F,\mathbf g)
 :=\;&\norm{e^{-tA_q}\mathbf u_0}_{L^s(0,T;L^q(\Omega))}
 +\norm{\mathbf F}_{L^s(0,T;L^r(\Omega))} +\norm{\mathbf g}_{L^s(0,T;
 W^{1-\frac1q,q}(\partial\Omega))}.
\end{align}
\begin{proposition}\label{prop:linear-solution}
	Assume that $\mathbf F,\mathbf g,\mathbf u_0$ satisfy \eqref{eq:data} and \eqref{eq:flux}. 
 Then the system \eqref{linear} admits a unique very weak solution $\overline{\mathbf U}\in L^s(0,T;L^q(\Omega))$. Moreover, there exists a constant $C^*>0$, independent of $T$, such that
\begin{align}\label{eq:linear-est}
 \norm{\overline{\mathbf U}}_{L^s(0,T;L^q(\Omega))}
 \le C^* \mathfrak M_T.
\end{align} 
\end{proposition}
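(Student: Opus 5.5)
We will construct $\overline{\mathbf U}$ explicitly through a mild-type formula and then verify the very weak identities. Set $\mathbf w(t):=\mathbb P_q\overline{\mathbf U}(t)-\mathbb P_q\Ug(t)-\vF(t)$, where the stationary fields are taken pointwise in time, so $\vF(t)$ is built from $\mathbf F(t)$ and $\Ug(t)$ from $\mathbf g(t)$. Formally, \eqref{l1} says that $\partial_t\mathbb P_q\overline{\mathbf U}+A_q\mathbf w=0$ in the dual sense. Our candidate is therefore
\begin{align*}
 \overline{\mathbf U}(t):=e^{-tA_q}\mathbf u_0+\int_0^tA_qe^{-(t-\tau)A_q}\bigl(\mathbb P_q\Ug(\tau)+\vF(\tau)\bigr)\,\dd\tau+\nabla\Hg(t).
\end{align*}
Because the integral term is singular, we instead define it as $A_q\mathbf z$, where $\mathbf z$ solves $\partial_t\mathbf z+A_q\mathbf z=\mathbf h$, $\mathbf z(0)=0$, with $\mathbf h:=\mathbb P_q\Ug+\vF\in L^s(0,T;L^q_\sigma(\Omega))$. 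Proposition~\ref{prop:linear}\ref{item:MR} with $\tau=s$, $p=q$ gives $\mathbf z\in W^{1,s}(0,T;L^q_\sigma)\cap L^s(0,T;D(A_q))$ and $\|A_q\mathbf z\|_{L^sL^q}\le C\|\mathbf h\|_{L^sL^q}$, with $C$ independent of $T$. By \eqref{eq:force-est} and \eqref{eq:Ha-est}, $\|\mathbf h\|_{L^sL^q}+\|\nabla\Hg\|_{L^sL^q}\le C(\|\mathbf F\|_{L^sL^r}+\|\mathbf g\|_{L^sW^{1-1/q,q}})$. We then set $\overline{\mathbf U}:=e^{-tA_q}\mathbf u_0+A_q\mathbf z+\nabla\Hg$, which immediately yields \eqref{eq:linear-est}. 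Measurability in time of $\vF$, $\Ug$ and $\Hg$ follows from the linearity and boundedness of the solution maps in Propositions~\ref{prop:force-field}--\ref{prop:boundary-extension}.

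Next we verify \eqref{l1} and \eqref{l2}. Since $A_q\mathbf z\in L^q_\sigma$ and $e^{-tA_q}\mathbf u_0\in L^q_\sigma$, we have $(I-\mathbb P_q)\overline{\mathbf U}=\nabla\Hg$. By the equivalence with \eqref{eq:gradient-component} established after Definition~\ref{def:solution}, this gives \eqref{l2}. For \eqref{l1}, note that $\mathbb P_q\overline{\mathbf U}-\mathbb P_q\Ug-\vF=e^{-tA_q}\mathbf u_0+A_q\mathbf z-\mathbf h=e^{-tA_q}\mathbf u_0-\partial_t\mathbf z$. Fix $\boldsymbol\Phi\in C^1_c([0,T);D(A_{q'}))$. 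For the initial-data part, the identity $-\int_0^T\!\int e^{-tA_q}\mathbf u_0\cdot(\partial_t\boldsymbol\Phi-A_{q'}\boldsymbol\Phi)=\int\mathbf u_0\cdot\boldsymbol\Phi(0)$ follows from \eqref{eq:semigroup-duality}, by writing $\int e^{-tA_q}\mathbf u_0\cdot\boldsymbol\Phi(t)=\int\mathbf u_0\cdot e^{-tA_{q'}}\boldsymbol\Phi(t)$ and differentiating in $t$. This is justified first for $\mathbf u_0\in D(A_q)$ and then by density, using $\mathbf u_0\in L^q_\sigma$ and the pairing with $L^{q'}$. For the $\mathbf z$ part, we need $-\int\!\int A_q\mathbf z\cdot\partial_t\boldsymbol\Phi-\int\!\int\partial_t\mathbf z\cdot A_{q'}\boldsymbol\Phi=0$. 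Using the symmetry $\int A_q\mathbf z\cdot\boldsymbol\Phi=\int\mathbf z\cdot A_{q'}\boldsymbol\Phi$, which comes from \eqref{eq:resolvent-duality}, together with time integration by parts (boundary terms vanish since $\mathbf z(0)=0$ and $\boldsymbol\Phi$ has compact support in $[0,T)$), the two terms cancel. Finally, the gradient part $\nabla\Hg$ drops out of $-\int\!\int\overline{\mathbf U}\cdot\partial_t\boldsymbol\Phi$, because $\partial_t\boldsymbol\Phi(t)\in D(A_{q'})\subset L^{q'}_\sigma$ annihilates gradients.

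For uniqueness, let $\mathbf u$ be the difference of two solutions, which solves the problem with zero data. Then \eqref{l2} gives $(I-\mathbb P_q)\mathbf u=0$, so $\mathbf u\in L^s(0,T;L^q_\sigma)$ and
\begin{align*}
 \int_0^T\!\!\int_\Omega\mathbf u\cdot(-\partial_t\boldsymbol\Phi+A_{q'}\boldsymbol\Phi)\,\dd x\dd t=0 .
\end{align*}
This is the main obstacle: we need a duality argument with a backward problem. Given $\boldsymbol\psi\in C_c^\infty((0,T);L^{q'}_\sigma)$, we apply Proposition~\ref{prop:linear}\ref{item:MR} with $p=q'$, $\tau=s'$ to the time-reversed problem to obtain $\boldsymbol\Phi$ with $-\partial_t\boldsymbol\Phi+A_{q'}\boldsymbol\Phi=\boldsymbol\psi$ and $\boldsymbol\Phi(T)=0$. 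This $\boldsymbol\Phi$ lies in $W^{1,s'}(0,T;L^{q'}_\sigma)\cap L^{s'}(0,T;D(A_{q'}))$ rather than in $C^1_c$, and it does not vanish near $t=T$ in general. We therefore extend the identity to this larger test class by density: mollify in time, cut off near $T$ (the cutoff error is controlled since $\boldsymbol\Phi(T)=0$ and $\boldsymbol\Phi\in W^{1,s'}$), and use that $\mathbf u\in L^sL^q$ pairs continuously with $L^{s'}L^{q'}$. This yields $\int\!\int\mathbf u\cdot\boldsymbol\psi=0$ for all such $\boldsymbol\psi$, hence $\mathbf u=0$ by the duality in Proposition~\ref{prop:linear}(i).
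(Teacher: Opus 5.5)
Your construction of $\overline{\mathbf U}$ and the estimate \eqref{eq:linear-est} coincide with the paper's. You use the same $\mathbf h=\vF+\mathbb P_q\Ug$, the same $\mathbf z$, and the same formula $\overline{\mathbf U}=e^{-tA_q}\mathbf u_0+A_q\mathbf z+\nabla\Hg$. The paper checks \eqref{l1} in one step, by pairing $\partial_t\overline{\mathbf Y}+\mathbb P_q\overline{\mathbf U}=\mathbf h$ (with $\overline{\mathbf Y}=A_q^{-1}e^{-tA_q}\mathbf u_0+\mathbf z$) against $A_{q'}\boldsymbol\Phi$. You split off the semigroup part and the $\mathbf z$ part instead, which is equivalent. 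If you differentiate on the $\boldsymbol\Phi$ side as you describe, no density argument in $\mathbf u_0$ is needed.

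The real difference is uniqueness. The paper never leaves the admissible test class. It takes $\boldsymbol\Phi=A_{q'}^{-1}\boldsymbol\Psi$ with $\boldsymbol\Psi\in C^1_c([0,T);L^{q'}_\sigma(\Omega))$, which is a legitimate test function. It then reads off that $\mathbf Y=A_q^{-1}\mathbf u^*$ solves $\partial_t\mathbf Y+A_q\mathbf Y=0$, $\mathbf Y(0)=0$ distributionally. This is a strong solution because $A_q\mathbf Y=\mathbf u^*\in L^s(0,T;L^q_\sigma(\Omega))$, and forward uniqueness in Proposition~\ref{prop:linear}\ref{item:MR} finishes.

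You instead run a Lions--Magenes transposition argument. You solve the backward adjoint problem by maximal regularity at $(p,\tau)=(q',s')$, which is admissible since $q'\in\mathcal E_q$. You then extend the identity by density to $W^{1,s'}(0,T;L^{q'}_\sigma(\Omega))\cap L^{s'}(0,T;D(A_{q'}))$. This is correct and is the more robust classical route. It costs dual-exponent maximal regularity and an approximation step, both of which the paper's lifting by $A_{q'}^{-1}$ avoids.

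Two small corrections to that step:
\begin{enumerate}
\item Since $\boldsymbol\psi$ has compact support in $(0,T)$, the backward solution vanishes identically on $[\sup\operatorname{supp}\boldsymbol\psi,T]$, by Duhamel for the time-reversed problem. So your remark that it ``does not vanish near $t=T$ in general'' is inaccurate. The cutoff near $T$ is superfluous, although your Hardy-type control of its error is valid. This observation is also how to handle $T=\infty$: solve backward from a finite time beyond the support of $\boldsymbol\psi$ and extend by zero.
\item At $t=0$, admissible test functions need not vanish. The time mollification therefore has to be preceded by an extension to $t<0$ (for example an even reflection) or done with a one-sided kernel.
\end{enumerate}
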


\begin{proof}
Set $$\mathbf h:=\vF+\mathbb P_q\Ug.$$
By estimates \eqref{eq:force-est}, \eqref{eq:Ua-est}, \eqref{eq:Ha-est}, and the boundedness of $\mathbb P_q$ in  Proposition~\ref{prop:linear}, we have 
\begin{align}\label{eq:h-est}
\begin{aligned}
 \norm{\mathbf h}_{L^s(0,T;L^q(\Omega))}
 \le C\Big(&\norm{\mathbf F}_{L^s(0,T;L^r(\Omega))}+\norm{\mathbf g}_{L^s(0,T;
 W^{1-\frac1q,q}(\partial\Omega))}\Big),
\end{aligned}
\end{align}
where $C$ is independent of $T$.

It follows from Proposition~\ref{prop:linear} (iii) that the Cauchy problem
\begin{align}\label{eq:z}
 \partial_t\mathbf z+A_q\mathbf z=\mathbf h, \qquad \mathbf z(0)=0
\end{align}
has a unique solution
$
 \mathbf z\in W^{1,s}(0,T;L_\sigma^q(\Omega))  \cap L^s(0,T;D(A_q)).
$
Define
\begin{align*}
 \overline{\mathbf Y}(t)
 :=A_q^{-1}e^{-tA_q}\mathbf u_0+\mathbf z(t)
\end{align*}
and
\begin{align}\label{eq:Ubar}
 \overline{\mathbf U}(t)
 :=A_q\overline{\mathbf Y}(t)+\nabla\Hg(t)
 =e^{-tA_q}\mathbf u_0+A_q\mathbf z(t)+\nabla\Hg(t).
\end{align}

We verify that $\overline{\mathbf U}$ satisfies \eqref{l1} and \eqref{l2} in Definition~\ref{def:linear-solution}. First,
since $\mathbb P_q\nabla\Hg=0$, we deduce from \eqref{eq:Ubar} that
\begin{align}\label{eq:Ybar-projection}
 \overline{\mathbf Y}=A_q^{-1}\mathbb P_q\overline{\mathbf U}.
\end{align}
Moreover,  the term $A_q^{-1}e^{-tA_q}\mathbf u_0$ satisfies the homogeneous equation
\begin{align*}
 \partial_t(A_q^{-1}e^{-tA_q}\mathbf u_0)
 +e^{-tA_q}\mathbf u_0=0,
\end{align*}
Combining this with the equation \eqref{eq:z}, we obtain
\begin{align}\label{eq:linear-evolution}
 \partial_t\overline{\mathbf Y}+
 \mathbb P_q\overline{\mathbf U}=\mathbf h
 =\vF+\mathbb P_q\Ug,
 \qquad
 \overline{\mathbf Y}(0)=A_q^{-1}\mathbf u_0.
\end{align}
Let $\boldsymbol\Phi\in\mathcal C_c^1([0,T);D(A_{q'}))$. Pairing \eqref{eq:linear-evolution} with $A_{q'}\boldsymbol\Phi$, integrating in time, and using   \eqref{eq:resolvent-duality} together with the fact that $\boldsymbol\Phi_t$ is solenoidal,  yields \eqref{l1} in Definition~\ref{def:linear-solution}. The identity \eqref{l2} follows directly from 
 the formula \eqref{eq:Ubar}, since the first two terms on the right-hand side of \eqref{eq:Ubar} are solenoidal and the remaining term \(\nabla H_{\mathbf g}\) satisfies \eqref{eq:Ha-flux}. Thus $\overline{\mathbf U}$ is a very weak solution of \eqref{linear} in the sense of Definition~\ref{def:linear-solution}.

For uniqueness, let $\mathbf u^*$ be the difference of two very weak solutions. 
Applying \eqref{l2} to $\mathbf u^*$ yields the corresponding identity with vanishing boundary term. In view of  \eqref{eq:gradient-component}, it implies
\[
(I-\mathbb P_q)\mathbf u^*(t)=\nabla H_{\mathbf g}(t)=0
\qquad \text{for a.e. } t\in(0,T),
\]
where the last equality follows from \eqref{eq:Ha-flux} with $\mathbf g=0$. Hence $\mathbf u^*=\mathbb P_q\mathbf u^*$.
 Now let
$\boldsymbol\Psi\in C_c^1([0,T);L_\sigma^{q'}(\Omega))$ and take
$\boldsymbol\Phi=A_{q'}^{-1}\boldsymbol\Psi$ in 
\eqref{l1} for the difference. Setting
$\mathbf Y:=A_q^{-1}\mathbf u^*$, one obtains
\begin{align*}
 -\int_0^T\!\!\int_\Omega
 \mathbf Y\cdot\partial_t\boldsymbol\Psi\,\dd x\dd t
 +\int_0^T\!\!\int_\Omega
 A_q\mathbf Y\cdot\boldsymbol\Psi\,\dd x\dd t=0,
 \qquad
 \mathbf Y(0)=0.
\end{align*}
Thus $ \partial_t\mathbf Y+A_q\mathbf Y=0$ in the sense of distributions.  Since
$A_q\mathbf Y=\mathbf u^*\in L^s(0,T;L_\sigma^q(\Omega))$, we have $\partial_t\mathbf Y\in L^s_{\mathrm{loc}}(0,T;L_\sigma^q(\Omega))$ and hence
$\mathbf Y\in W^{1,s}_{\mathrm{loc}}(0,T;L_\sigma^q(\Omega))$. The uniqueness of the  Cauchy problem in Proposition~\ref{prop:linear} (iii) gives $\mathbf Y=0$,  and consequently $\mathbf u^*=0$.

It remains to prove the bound \eqref{eq:linear-est}. From  \eqref{eq:Ubar}, we have
\[
\|\overline{\mathbf U}\|_{L^s(0,T;L^q(\Omega))}
\leq
\|e^{-tA_q}\mathbf u_0\|_{L^s(0,T;L^q(\Omega))}
+\|A_q\mathbf z\|_{L^s(0,T;L^q(\Omega))}
+\|\nabla H_{\mathbf g}\|_{L^s(0,T;L^q(\Omega))}.
\]
The first term is part of $\mathfrak M_T$ in \eqref{eq:MT}. For the second term, \eqref{eq:MR} and \eqref{eq:h-est} yield
\[
\|A_q\mathbf z\|_{L^s(0,T;L^q(\Omega))}
\leq C\bigl(\|\mathbf F\|_{L^s(0,T;L^r(\Omega))}+\|\mathbf g\|_{L^s(0,T;W^{1-\frac1q,q}(\partial\Omega))}\bigr),
\]
where $C$ is independent of $T$.
The third term is bounded by  $\mathfrak M_T$ via \eqref{eq:Ha-est}, again with a constant independent of $T$. Combining these estimates gives \eqref{eq:linear-est}.

\end{proof}


\section{The nonlinear term and the integral equation}\label{sec5}

In this section, we transform the  very weak formulation from Definition~\ref{def:solution} into an equivalent integral equation in the space $L^s(0,T;L^q(\Omega))$. This expresses the solution as a fixed point of a nonlinear operator, which  will be the key to the existence proof.

 The main difficulty is that the nonlinear term $\Div(\mathbf u\otimes \mathbf u)$ is not an $L^q$-function in space, since $\mathbf u\otimes \mathbf u$ belongs only to $L^{\frac q2}(\Omega)$. Consequently, the expression $A_q^{-1}\mathbb P_q \Div(\mathbf u\otimes \mathbf u)$ cannot be interpreted directly. To overcome this, we define this term by duality, using the inverse Stokes operator and the analytic semigroup.

We begin with the stationary operator $\mathcal R_q$, which will replace the formal expression $A_q^{-1}\mathbb P_q \Div \mathbf G$ for rough tensor fields $\mathbf G$. 
 For $\boldsymbol\psi\in L_\sigma^{q'}(\Omega)$, 
 the inverse semigroup formula 
 and \eqref{eq:cross-exp} give 
\begin{align}\label{eq:resolvent-gradient}
 \norm{\nabla A_{q'}^{-1}\boldsymbol\psi}_{L^{(\frac q2)'}(\Omega)}
 &\le\int_0^\infty
 \norm{\nabla e^{-tA_{q'}}\boldsymbol\psi}_{L^{(\frac q2)'}(\Omega)}\,\dd t
 \le C\norm{\boldsymbol\psi}_{L^{q'}(\Omega)}.
\end{align}
For  $\mathbf G\in L^{\frac q2}(\Omega)$,  define
$\mathscr R_q\mathbf G\in L_\sigma^q(\Omega)$ by
\begin{align}\label{eq:Rq}
 \int_\Omega\mathscr R_q\mathbf G\cdot\boldsymbol\psi\,\dd x
 =-\int_\Omega\mathbf G:\nabla A_{q'}^{-1}
 \boldsymbol\psi\,\dd x,
 \qquad
 \forall\ \boldsymbol\psi\in L_\sigma^{q'}(\Omega).
\end{align}
This is well defined by \eqref{eq:resolvent-gradient}.  Moreover,
\begin{align}\label{eq:Rq-est}
 \norm{\mathscr R_q\mathbf G}_{L^q(\Omega)}
 \le C\norm{\mathbf G}_{L^{\frac q2}(\Omega)}.
\end{align}
If $\textbf{G}\in C_c^\infty(\Omega)$, then by \eqref{eq:projection-duality}, \eqref{eq:resolvent-duality},  \eqref{eq:Rq}  and integration by parts,
\begin{align}\label{eq:smooth-Rq}
 \mathscr{R}_q\mathbf G=A_q^{-1}\mathbb P_q\Div\mathbf G
\end{align}

Next, we introduce the time-dependent operator $\mathcal K$, which incorporates the evolution semigroup and will be used to define the nonlinear term in the integral formulation.
For $\mathbf G \in L^{\frac s2}(0, T ; L^{\frac q2}(\Omega))$, define $\mathcal K\mathbf G(t)$ by
\begin{align}\label{eq:K}
 \int_\Omega\mathcal K\mathbf G(t)\cdot\boldsymbol\psi\,\dd x
 :=-\int_0^t\!\!\int_\Omega
 \mathbf G(\tau):\nabla e^{-(t-\tau)A_{q'}}
 \boldsymbol\psi\,\dd x\dd\tau, \quad \text{a.e.}\  t\in(0,T).
\end{align}

The following lemma extends this definition to $L^{\frac s2}(0,T;L^{\frac q2}(\Omega))$, the space in which $\mathbf u\otimes\mathbf u$ lies when $\mathbf u\in L^s(0,T;L^q(\Omega))$.

\begin{lemma}\label{lem:K}
The map $\mathcal K$ is bounded from
$L^{\frac s2}(0,T;L^{\frac q2}(\Omega))$ to
$L^s(0,T;L_\sigma^q(\Omega))$ with
\begin{align}\label{eq:K-est}
 \norm{\mathcal K\mathbf G}_{L^s(0,T;L^q(\Omega))}
 \le C_K
 \norm{\mathbf G}_{L^{\frac s2}(0,T;L^{\frac q2}(\Omega))},
\end{align}
where the constant $C_K>0$ is independent of $T$.
\end{lemma}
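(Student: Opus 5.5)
The plan is to bound $\mathcal K\mathbf G(t)$ pointwise in time by duality, and then apply a one-dimensional Hardy--Littlewood--Sobolev inequality in the time variable.

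\emph{Step 1: a fixed-time bound.} Fix $t$ and $\boldsymbol\psi\in L_\sigma^{q'}(\Omega)$. By H\"older's inequality in $x$ and the cross estimate \eqref{eq:cross-exp},
\begin{align*}
 \Bigl|\int_0^t\!\!\int_\Omega\mathbf G(\tau):\nabla e^{-(t-\tau)A_{q'}}\boldsymbol\psi\,\dd x\dd\tau\Bigr|
 \le C\norm{\boldsymbol\psi}_{L^{q'}(\Omega)}\int_0^t (t-\tau)^{-\frac12-\frac3{2q}}e^{-\omega(t-\tau)}\norm{\mathbf G(\tau)}_{L^{\frac q2}(\Omega)}\,\dd\tau .
\end{align*}
Call the time integral on the right $I(t)$. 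Whenever $I(t)<\infty$, the map $\boldsymbol\psi\mapsto$ (right-hand side of \eqref{eq:K}) is a bounded functional on $L_\sigma^{q'}(\Omega)$. The duality statement in Proposition~\ref{prop:linear}(i) then yields a unique element $\mathcal K\mathbf G(t)\in L_\sigma^q(\Omega)$ with
\[
\norm{\mathcal K\mathbf G(t)}_{L^q(\Omega)}\le C\,I(t).
\]
Measurability in $t$ follows by testing against a countable dense subset of $L_\sigma^{q'}$, since the pairing $t\mapsto\int_\Omega\mathcal K\mathbf G(t)\cdot\boldsymbol\psi$ is measurable by Fubini. So $\mathcal K\mathbf G$ is weakly measurable, and by Pettis' theorem (using reflexivity and separability) it is strongly measurable.

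\emph{Step 2: the time integral.} Drop the factor $e^{-\omega(t-\tau)}\le1$ and extend $\norm{\mathbf G(\cdot)}_{L^{q/2}}$ by zero outside $(0,T)$. Then $I(t)\le (|\cdot|^{-\gamma}*f)(t)$ with $\gamma=\frac12+\frac3{2q}$ and $f=\norm{\mathbf G(\cdot)}_{L^{q/2}}\in L^{s/2}(\R)$. Since $q>3$ we have $0<\gamma<1$. The one-dimensional Hardy--Littlewood--Sobolev inequality maps $L^{a}(\R)\to L^{b}(\R)$ provided $1+\frac1b=\frac1a+\gamma$ and $1<a<b<\infty$. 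With $a=\frac s2$ and $b=s$ the required relation is $1+\frac1s=\frac2s+\frac12+\frac3{2q}$, i.e. $\frac12=\frac1s+\frac3{2q}$, which is exactly the Serrin condition $\frac2s+\frac3q=1$. The restrictions on $a$ hold because $2<s<\infty$ gives $a=\frac s2>1$ and $a<b$. Consequently
\[
\norm{I}_{L^s(0,T)}\le C\norm{\mathbf G}_{L^{\frac s2}(0,T;L^{\frac q2})},
\]
with a constant independent of $T$, because HLS holds on all of $\R$. This gives \eqref{eq:K-est}.

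\emph{Main obstacle.} The only genuinely delicate point is the exponent bookkeeping in Step 2: the singularity $(t-\tau)^{-\gamma}$ has to be integrable, which needs $\gamma<1$, i.e. $q>3$, and the scaling must match exactly. Both are guaranteed by the hypotheses on $q$ and $s$. Everything else is routine duality, relying on \eqref{eq:cross-exp} from Proposition~\ref{prop:linear}, which is taken as given.
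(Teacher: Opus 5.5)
Your proposal is correct and follows essentially the same route as the paper: H\"older's inequality with \eqref{eq:cross-exp} gives the pointwise bound by $\int_0^t (t-\tau)^{-\gamma}\norm{\mathbf G(\tau)}_{L^{\frac q2}(\Omega)}\,\dd\tau$ with $\gamma=\frac12+\frac{3}{2q}<1$. Zero extension in time and the one-dimensional Hardy--Littlewood--Sobolev inequality from $L^{\frac s2}(\R)$ to $L^s(\R)$ then give the $T$-independent bound. Your exponent check via $\frac2s+\frac3q=1$, the duality construction of $\mathcal K\mathbf G(t)\in L_\sigma^q(\Omega)$, and the Pettis measurability argument are details the paper leaves implicit.
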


\begin{proof}
    Set $\gamma:=\frac12+\frac{3}{2q}$. 
   Since $q>3$, we have $\gamma<1$. Using H\"older's inequality and \eqref{eq:cross-exp}, we obtain
\begin{align*}
 \norm{\mathcal K\mathbf G(t)}_{L^q(\Omega)}
 \le C\int_0^t(t-\tau)^{-\gamma}
 \norm{\mathbf G(\tau)}_{L^{\frac q2}(\Omega)}\,\dd\tau.
\end{align*}
 Extending $\|\mathbf G(\cdot)\|_{L^{q/2}(\Omega)}$ by zero outside $(0,T)$, we apply the one-dimensional Hardy--Littlewood--Sobolev inequality \cite[Chapter~4]{LiebLoss2001} of order $\frac 1s$, which maps $L^{\frac s2}(\mathbb R)$ into $L^s(\mathbb R)$ since $1<\frac s2<s$. This implies \eqref{eq:K-est}. The constant is independent of $T$ because of the zero extension.
\end{proof}

For $\mathbf w,\mathbf v\in L^s(0,T;L^q(\Omega))$, define
\begin{align*}
 \mathcal Q(\mathbf w,\mathbf v)
 :=\mathcal K(\mathbf w\otimes\mathbf v).
\end{align*}
Lemma~\ref{lem:K}  and H\"older's inequality  yield that 
\begin{align}\label{eq:Q-est}
 \norm{\mathcal Q(\mathbf w,\mathbf v)}_{L^s(0,T;L^q(\Omega))}
 \le C_B
 \norm{\mathbf w}_{L^s(0,T;L^q(\Omega))}
 \norm{\mathbf v}_{L^s(0,T;L^q(\Omega))} 
\end{align}
with $C_B$ independent of $T$.

The next lemma shows that $\mathcal K\mathbf G$  is exactly $A_q\mathbf Y$, where $\mathbf Y$ is the unique solution of the evolution equation with source $\mathscr R_q\mathbf G$ and zero initial data by Proposition~\ref{prop:linear} (iii). 

\begin{lemma}\label{lem:bridge}
Let
$\mathbf G\in L^{\frac s2}(0,T;L^{\frac q2}(\Omega))$  and  $\mathbf Y\in W^{1,\frac s2}(0,T;L_\sigma^q(\Omega))
 \cap L^{\frac s2}(0,T;D(A_q))$
be the unique solution of
\begin{align}\label{eq:Y-G}
 \partial_t\mathbf Y+A_q\mathbf Y=\mathscr R_q\mathbf G,
 \qquad
 \mathbf Y(0)=0.
\end{align}
Then
\begin{align}\label{eq:AY-K}
 A_q\mathbf Y=\mathcal K\mathbf G
 \qquad\text{a.e. on }(0,T),
\end{align}
and
\begin{align}\label{eq:K-operator}
 \partial_t(A_q^{-1}\mathcal K\mathbf G)
 +\mathcal K\mathbf G=\mathscr R_q\mathbf G,
 \qquad
 (A_q^{-1}\mathcal K\mathbf G)(0)=0.
\end{align}
\end{lemma}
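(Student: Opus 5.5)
The plan is to identify $\mathbf Y$ through the variation-of-constants formula and then compare with the definition \eqref{eq:K} of $\mathcal K\mathbf G$ by duality. Since $\mathscr R_q\mathbf G\in L^{\frac s2}(0,T;L_\sigma^q(\Omega))$ by \eqref{eq:Rq-est}, Proposition~\ref{prop:linear}~(iii) with $\tau=\frac s2>1$ gives the unique strong solution $\mathbf Y$ of \eqref{eq:Y-G}. This solution is represented by the mild formula
\[
\mathbf Y(t)=\int_0^t e^{-(t-\tau)A_q}\mathscr R_q\mathbf G(\tau)\,\dd\tau .
\]
This holds because the mild solution coincides with the strong one: uniqueness in (iii) for the analytic semigroup, or differentiating $e^{-(t-\tau)A_q}\mathbf Y(\tau)$ in $\tau$.

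The next step is to compute $A_q\mathbf Y(t)$ weakly. Fix $\boldsymbol\psi\in L_\sigma^{q'}(\Omega)$. Using \eqref{eq:resolvent-duality} gives
\[
\int_\Omega A_q\mathbf Y(t)\cdot A_{q'}^{-1}\boldsymbol\psi\,\dd x=\int_\Omega\mathbf Y(t)\cdot\boldsymbol\psi\,\dd x .
\]
The right-hand side is then expanded via the mild formula and \eqref{eq:semigroup-duality}:
\[
\int_\Omega \mathbf Y(t)\cdot\boldsymbol\psi\,\dd x=\int_0^t\int_\Omega \mathscr R_q\mathbf G(\tau)\cdot e^{-(t-\tau)A_{q'}}\boldsymbol\psi\,\dd x\dd\tau .
\]
Applying \eqref{eq:Rq} with test field $e^{-(t-\tau)A_{q'}}\boldsymbol\psi$, and using that $A_{q'}^{-1}$ commutes with the semigroup, this becomes
\[
-\int_0^t\int_\Omega\mathbf G(\tau):\nabla e^{-(t-\tau)A_{q'}}A_{q'}^{-1}\boldsymbol\psi\,\dd x\dd\tau .
\]
This is exactly $\int_\Omega\mathcal K\mathbf G(t)\cdot A_{q'}^{-1}\boldsymbol\psi\,\dd x$ by \eqref{eq:K} with test function $A_{q'}^{-1}\boldsymbol\psi$. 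The semigroup in $L^{q'}_\sigma$ maps $D(A_{q'})$ to itself, so all pairings make sense. Each time integral is absolutely convergent: the integrand is bounded by $\|\mathbf G(\tau)\|_{L^{q/2}}$ times the uniform bound \eqref{eq:graph-gradient} for $\nabla$ applied to elements of $D(A_{q'})$, which is integrable for a.e.\ $t$. Since $A_{q'}^{-1}$ maps $L^{q'}_\sigma$ onto $D(A_{q'})$, it remains to show that $D(A_{q'})$ is dense in $L_\sigma^{q'}(\Omega)$, or to argue directly with arbitrary $\boldsymbol\psi$. The direct route is to rerun the computation testing $A_q\mathbf Y(t)$ against $\boldsymbol\psi$ itself. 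This requires writing $A_q\mathbf Y(t)=\int_0^tA_qe^{-(t-\tau)A_q}\mathscr R_q\mathbf G\,\dd\tau$, which is only conditionally meaningful because of the $(t-\tau)^{-1}$ singularity. The density route is therefore cleaner: the two elements $A_q\mathbf Y(t)$ and $\mathcal K\mathbf G(t)$ of $L_\sigma^q$ agree against the dense set $D(A_{q'})$ of $L_\sigma^{q'}$. Density holds because $e^{-hA_{q'}}\boldsymbol\psi\to\boldsymbol\psi$ by strong continuity, and analyticity puts $e^{-hA_{q'}}\boldsymbol\psi$ in the domain. The duality in Proposition~\ref{prop:linear}~(i) then yields \eqref{eq:AY-K} for a.e.\ $t$.

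Finally, \eqref{eq:K-operator} follows by substituting \eqref{eq:AY-K} into \eqref{eq:Y-G}. Since $A_q^{-1}\mathcal K\mathbf G=\mathbf Y$, we get $\partial_t(A_q^{-1}\mathcal K\mathbf G)+\mathcal K\mathbf G=\mathscr R_q\mathbf G$ with $(A_q^{-1}\mathcal K\mathbf G)(0)=\mathbf Y(0)=0$, which is meaningful since $\mathbf Y\in W^{1,\frac s2}(0,T;L^q_\sigma)$ is continuous in time. The main obstacle I expect is justifying the Fubini/duality manipulations rigorously and measurably in $t$, in particular the commutation $\nabla e^{-(t-\tau)A_{q'}}A_{q'}^{-1}\boldsymbol\psi=\nabla A_{q'}^{-1}e^{-(t-\tau)A_{q'}}\boldsymbol\psi$ and the integrability in $\tau$. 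To handle this, I would first prove the identity for $\mathbf G$ in a dense class, e.g.\ $C_c^\infty$ in space-time where \eqref{eq:smooth-Rq} applies. I would then pass to the limit using \eqref{eq:K-est}, \eqref{eq:Rq-est} and the maximal regularity bound \eqref{eq:MR} with $\tau=\frac s2$, which are continuous in the relevant norms.
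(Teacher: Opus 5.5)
Your argument is correct, but it takes a different route from the paper.

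\textbf{The paper's route.} The paper approximates $\mathbf G$ by $\mathbf G_n\in C_c^\infty((0,T)\times\Omega)$. For these, \eqref{eq:smooth-Rq} makes $\mathscr R_q\mathbf G_n=A_q^{-1}\mathbb P_q\Div\mathbf G_n$ a genuine element of $D(A_q)$. Duhamel's formula then gives $A_q\mathbf Y_n(t)=\int_0^te^{-(t-\tau)A_q}\mathbb P_q\Div\mathbf G_n(\tau)\,\dd\tau$, and \eqref{eq:semigroup-duality} turns this into \eqref{eq:K}. The paper then passes to the limit using \eqref{eq:MR}, \eqref{eq:Rq-est} and Lemma~\ref{lem:K}. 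This requires comparing an $L^{\frac s2}$-limit with an $L^s$-limit on finite subintervals.

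\textbf{Your route.} You work with general $\mathbf G$ from the start. You test $\mathbf Y(t)$ rather than $A_q\mathbf Y(t)$ and commute $A_{q'}^{-1}$ with the semigroup. The gradient then falls on $e^{-(t-\tau)A_{q'}}A_{q'}^{-1}\boldsymbol\psi$, which is bounded in $L^{(\frac q2)'}$ uniformly in $\tau$ by \eqref{eq:graph-gradient} and \eqref{eq:exp}. So every time integral converges absolutely. You need no approximation, no use of \eqref{eq:smooth-Rq}, and no comparison of limits. The price is only two standard facts: the mild representation of the maximal-regularity solution, and the commutation of $A_{q'}^{-1}$ with $e^{-\tau A_{q'}}$. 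The fallback you sketch in your last paragraph is essentially the paper's proof, so you do not need it.

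\textbf{A simplification.} The detour through density of $D(A_{q'})$ in $L^{q'}_\sigma(\Omega)$ is unnecessary. Your justification of it via strong continuity of the semigroup at $0$ also tacitly presupposes dense definedness, which here is only supplied by Breit--Gaudin. Instead, apply \eqref{eq:resolvent-duality} once more: $\int_\Omega\mathcal K\mathbf G(t)\cdot A_{q'}^{-1}\boldsymbol\psi\,\dd x=\int_\Omega A_q^{-1}\mathcal K\mathbf G(t)\cdot\boldsymbol\psi\,\dd x$. Your chain then reads $\int_\Omega\bigl(\mathbf Y(t)-A_q^{-1}\mathcal K\mathbf G(t)\bigr)\cdot\boldsymbol\psi\,\dd x=0$ for all $\boldsymbol\psi\in L^{q'}_\sigma(\Omega)$. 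The duality in Proposition~\ref{prop:linear}~(i) gives $\mathbf Y(t)=A_q^{-1}\mathcal K\mathbf G(t)$ directly, and this yields both \eqref{eq:AY-K} and \eqref{eq:K-operator} at once.
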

\begin{proof}
Choose a sequence
$\mathbf G_n\in C_c^\infty((0,T)\times\Omega)$ such that
$\mathbf G_n\to\mathbf G$ in
$L^{\frac s2}(0,T;L^{\frac q2}(\Omega))$ as $n\to\infty$.
If $T=\infty$, take the approximants with compact time support. 
Let
$\mathbf Y_n$ be the unique solution of
\begin{align*}
 \partial_t\mathbf Y_n+A_q\mathbf Y_n=\mathscr R_q\mathbf G_n,
 \qquad
 \mathbf Y_n(0)=0.
\end{align*}
By Duhamel's formula, 
together with \eqref{eq:smooth-Rq}, we have
\begin{align*}
 A_q\mathbf Y_n(t)
 &=\int_0^t e^{-(t-\tau)A_q}
 \mathbb P_q\Div\mathbf G_n(\tau)\,\dd\tau.
\end{align*}
Pairing this expression with $\boldsymbol\psi\in L_\sigma^{q'}(\Omega)$ and using  \eqref{eq:semigroup-duality}, we obtain  \eqref{eq:K} for $\mathcal K\mathbf G_n$. Hence
\begin{align*}
 A_q\mathbf Y_n=\mathcal K\mathbf G_n.
\end{align*}
 It follows from \eqref{eq:MR} and \eqref{eq:Rq-est} that
\begin{align*}
 A_q\mathbf Y_n\to A_q\mathbf Y
 \quad\text{in }L^{\frac s2}(0,T;L_\sigma^q(\Omega)) 
\end{align*}
as $n\to\infty$. On the other hand, Lemma~\ref{lem:K} gives
\begin{align*}
 \mathcal K\mathbf G_n\to\mathcal K\mathbf G
 \quad\text{in }L^s(0,T;L_\sigma^q(\Omega)).
\end{align*}
Since \(L^s(0,T_0)\hookrightarrow L^{\frac s2}(0,T_0)\) on every finite interval \((0,T_0)\subset(0,T)\), the two limits of the common sequence \(A_q\mathbf Y_n=\mathcal K\mathbf G_n\) coincide on \((0,T_0)\).  This proves \eqref{eq:AY-K} on every finite subinterval.
If $T=\infty$, the same conclusion follows by exhaustion. 
Substituting \eqref{eq:AY-K} into \eqref{eq:Y-G} and using invertibility of $A_q$ gives $\mathbf Y=A_q^{-1}\mathcal K\mathbf G$ and
\begin{align*}
		\partial_t\mathbf Y+\mathcal K\mathbf G=\mathscr R_q\mathbf G,\qquad \mathbf Y(0)=0.
		\end{align*}
		which is \eqref{eq:K-operator}.
\end{proof}

Now we are in position to establish the equivalence between the  very weak formulation in Definition~\ref{def:solution} with an equivalent integral equation.


\begin{proposition}\label{prop:mild}
Assume that $\mathbf F,\mathbf g,\mathbf u_0$ satisfy \eqref{eq:data} and \eqref{eq:flux}. Then $\mathbf u\in L^s(0,T;L^q(\Omega))$ is a very weak solution
of NSE \eqref{eq:NS} if and only if it satisfies 
\begin{align}\label{eq:mild}
 \mathbf u=\overline{\mathbf U}-\mathcal Q(\mathbf u,\mathbf u)
 \qquad\text{in }L^s(0,T;L^q(\Omega)),
\end{align}
where $\overline{\mathbf U}$ is given by \eqref{eq:Ubar}.
\end{proposition}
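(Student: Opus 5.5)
The plan is to reduce both directions to a statement about the stationary-in-time variable $\mathbf Y:=A_q^{-1}\mathbb P_q\mathbf u$ and to exploit linearity: the very weak formulation \eqref{eq:transposition-graph} is the linear formulation \eqref{l1} plus the nonlinear term, which Lemma~\ref{lem:bridge} converts into an evolution equation with source $\mathscr R_q(\mathbf u\otimes\mathbf u)$. Throughout we set $\mathbf G:=\mathbf u\otimes\mathbf u\in L^{\frac s2}(0,T;L^{\frac q2}(\Omega))$.

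\emph{Sufficiency.} Suppose \eqref{eq:mild} holds. Since $\mathcal Q(\mathbf u,\mathbf u)=\mathcal K\mathbf G$ is solenoidal by Lemma~\ref{lem:K}, the scalar identity \eqref{eq:scalar-transposition} for $\mathbf u$ follows from the one for $\overline{\mathbf U}$, which was checked in Proposition~\ref{prop:linear-solution}. For \eqref{eq:transposition-graph}, we note that $\overline{\mathbf U}$ already satisfies \eqref{l1}. It therefore suffices to show that $\mathbf w:=\mathcal K\mathbf G$ satisfies
\[
-\int_0^T\!\!\int_\Omega\mathbf w\cdot\partial_t\boldsymbol\Phi+\int_0^T\!\!\int_\Omega\mathbf w\cdot A_{q'}\boldsymbol\Phi=-\int_0^T\!\!\int_\Omega\mathbf G:\nabla\boldsymbol\Phi .
\]
Subtracting this from \eqref{l1} for $\overline{\mathbf U}$ then yields \eqref{eq:transposition-graph}, because $\mathbb P_q\mathbf u-\mathbb P_q\overline{\mathbf U}=-\mathbf w$ and the $\partial_t$-terms combine. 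To prove the displayed identity, pair \eqref{eq:K-operator} with $A_{q'}\boldsymbol\Phi$, integrate in time, and use \eqref{eq:resolvent-duality} to move $A_q^{-1}$ across; the initial term vanishes. The source term becomes $\int\mathscr R_q\mathbf G\cdot A_{q'}\boldsymbol\Phi=-\int\mathbf G:\nabla\boldsymbol\Phi$ by \eqref{eq:Rq} with $\boldsymbol\psi=A_{q'}\boldsymbol\Phi$.

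\emph{Necessity.} Let $\mathbf u$ be a very weak solution and put $\mathbf u^*:=\mathbf u-\overline{\mathbf U}+\mathcal K\mathbf G$. By \eqref{eq:gradient-component}, applied to both $\mathbf u$ and $\overline{\mathbf U}$, together with the solenoidality of $\mathcal K\mathbf G$, we get $(I-\mathbb P_q)\mathbf u^*=0$, so $\mathbf u^*\in L^s(0,T;L^q_\sigma)$. Subtracting the identities from the sufficiency step shows that $\mathbf u^*$ satisfies the homogeneous \eqref{l1}, with zero initial datum, $\mathbf F=0$, $\mathbf g=0$. We then repeat the uniqueness argument of Proposition~\ref{prop:linear-solution}. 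Take $\boldsymbol\Phi=A_{q'}^{-1}\boldsymbol\Psi$ with $\boldsymbol\Psi\in C^1_c([0,T);L^{q'}_\sigma)$ and set $\mathbf Y=A_q^{-1}\mathbf u^*$. This gives $\partial_t\mathbf Y+A_q\mathbf Y=0$ distributionally with $\mathbf Y(0)=0$, hence $\mathbf Y=0$ and $\mathbf u^*=0$, which is \eqref{eq:mild}.

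\emph{Main obstacle.} The delicate point is the rigor of the pairing steps. The terms $A_q^{-1}\mathcal K\mathbf G$ and $\mathbf Y$ lie only in $W^{1,\frac s2}$ in time, not in $W^{1,s}$, so we must check that integrating by parts in time against $\boldsymbol\Phi\in C^1_c([0,T);D(A_{q'}))$ is legitimate. Here the compact support of $\boldsymbol\Phi$ makes $L^{\frac s2}$ integrability sufficient, and the initial value is meaningful by continuity of $W^{1,\frac s2}$ functions. We also need the uniqueness statement in Proposition~\ref{prop:linear} (iii) to apply to solutions with only local $W^{1,s}$ regularity and zero trace at $t=0$. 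The intended route is to show that $\mathbf Y$ is continuous into $L^q_\sigma$ with $\mathbf Y(0)=0$, interpreted weakly through the test functions, and then to invoke (iii) with $\tau=s$ on finite subintervals.
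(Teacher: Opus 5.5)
Your proposal is correct and follows essentially the same route as the paper. Sufficiency combines \eqref{l1} for $\overline{\mathbf U}$ with \eqref{eq:K-operator} tested against $A_{q'}\boldsymbol\Phi$ and \eqref{eq:Rq}; in the necessity direction your $A_q^{-1}\mathbf u^*$ is exactly the paper's $\mathbf Z=A_q^{-1}\bigl(\mathbb P_q\mathbf u-\mathbb P_q\overline{\mathbf U}+\mathcal Q(\mathbf u,\mathbf u)\bigr)$, which solves the homogeneous Cauchy problem and vanishes by Proposition~\ref{prop:linear}~(iii). The only difference is organizational: you subtract at the level of the weak identities and reuse the uniqueness argument of Proposition~\ref{prop:linear-solution}, whereas the paper first derives \eqref{eq:operator-from-transposition} for $A_q^{-1}\mathbb P_q\mathbf u$ and subtracts \eqref{eq:linear-evolution} and \eqref{eq:K-operator} there.
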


\begin{proof}
Assume that $\mathbf u\in L^s(0,T;L^q(\Omega))$ is a very weak solution of NSE \eqref{eq:NS}.
From  \eqref{eq:Ubar}, since both $e^{-tA_q}\mathbf u_0$ and $A_q\mathbf z$ take values in $L_\sigma^q(\Omega)$, we have
\begin{align*}
 (I-\mathbb P_q)\overline{\mathbf U}=\nabla\Hg.
\end{align*}
Then it follows from  \eqref{eq:gradient-component} that \((I-\mathbb P_q)\mathbf u=(I-\mathbb P_q)\overline{\mathbf U}\). 
 Let
$\boldsymbol\Psi\in C_c^1([0,T);L_\sigma^{q'}(\Omega))$ and take
$\boldsymbol\Phi=A_{q'}^{-1}\boldsymbol\Psi$ in  \eqref{eq:transposition-graph}.  For
$\mathbf Y=A_q^{-1}\mathbb P_q\mathbf u$, it derives from  \eqref{eq:resolvent-duality} and  \eqref{eq:Rq} that
\begin{small}
\begin{align*}
 -\int_0^T\!\!\int_\Omega
 \mathbf Y\cdot\partial_t\boldsymbol\Psi\,\dd x\dd t
 +\int_0^T\!\!\int_\Omega
 \bigl(\mathbb P_q\mathbf u-\mathbb P_q\Ug
 +\mathscr R_q(\mathbf u\otimes\mathbf u)-\vF\bigr)
 \cdot\boldsymbol\Psi\,\dd x\dd t= \int_\Omega A_q^{-1}\mathbf u_0\cdot
 \boldsymbol\Psi(0)\,\dd x.
\end{align*}
\end{small}
Consequently, on every finite subinterval,
\begin{align}\label{eq:operator-from-transposition}
 \partial_t\mathbf Y+\mathbb P_q\mathbf u
 =\vF+\mathbb P_q\Ug
 -\mathscr R_q(\mathbf u\otimes\mathbf u),
 \qquad
 \mathbf Y(0)=A_q^{-1}\mathbf u_0.
\end{align}
Since all terms on the right-hand side belong locally to
$L^{\frac s2}(0,T;L_\sigma^q(\Omega))$, we get
$\mathbf Y\in W_{\mathrm{loc}}^{1,\frac s 2}(0,T;L_\sigma^q(\Omega))$  and the initial datum‌ in \eqref{eq:operator-from-transposition} is well-defined.
Subtracting \eqref{eq:linear-evolution} from
\eqref{eq:operator-from-transposition} and applying
Lemma~\ref{lem:bridge} with
$\mathbf G=\mathbf u\otimes\mathbf u$, then
the vector field
\begin{align*}
 \mathbf Z:=A_q^{-1}\bigl(
 \mathbb P_q\mathbf u-
 \mathbb P_q\overline{\mathbf U}
 +\mathcal Q(\mathbf u,\mathbf u)\bigr)
\end{align*}
satisfies
\begin{align*}
 \partial_t \mathbf Z+A_q\mathbf Z=0,
 \qquad
 \mathbf Z(0)=0.
\end{align*}
By uniqueness of the Cauchy problem in Proposition~\ref{prop:linear} (iii),  $\mathbf Z=0$. 
Since $A_q$ is injective,
\begin{align*}
 \mathbb P_q\mathbf u=
 \mathbb P_q\overline{\mathbf U}-\mathcal Q(\mathbf u,\mathbf u).
\end{align*}
Together with
$(I-\mathbb P_q)\mathbf u=(I-\mathbb P_q)\overline{\mathbf U}=\nabla\Hg$,
we obtain
\begin{align*}
 \mathbf u
 &=\mathbb P_q\mathbf u+(I-\mathbb P_q)\mathbf u =\mathbb P_q\overline{\mathbf U}-\mathcal Q(\mathbf u,\mathbf u)
 +(I-\mathbb P_q)\overline{\mathbf U}
 =\overline{\mathbf U}-\mathcal Q(\mathbf u,\mathbf u),
\end{align*}
which implies \eqref{eq:mild}.

Conversely, assume that  \eqref{eq:mild} holds. Since
$\mathcal Q(\mathbf u,\mathbf u)=\mathcal K(\mathbf u\otimes\mathbf u)$ takes
values in $L_\sigma^q(\Omega)$, we have
\begin{align*}
 (I-\mathbb P_q)\mathbf u=(I-\mathbb P_q)\overline{\mathbf U}=\nabla\Hg.
\end{align*}
Combining this identity with \eqref{eq:Ha-flux} and integrating in time yields
 \eqref{eq:scalar-transposition}.

It remains to prove \eqref{eq:transposition-graph}. Put $\mathbf G=\mathbf u\otimes\mathbf u$ and
$\mathbf X=A_q^{-1}\mathcal K\mathbf G$. From \eqref{eq:mild},
\begin{align*}
 A_q^{-1}\mathbb P_q\mathbf u
 =\overline{\mathbf Y}-\mathbf X.
\end{align*}
Applying \eqref{eq:linear-evolution} to $\overline{\mathbf Y}$ and \eqref{eq:K-operator} to $\mathbf X$, we get
\begin{align*}
 \partial_t(A_q^{-1}\mathbb P_q\mathbf u)
 +\mathbb P_q\mathbf u
 =\vF+\mathbb P_q\Ug-
 \mathscr R_q(\mathbf u\otimes\mathbf u),
 \qquad
 (A_q^{-1}\mathbb P_q\mathbf u)(0)=A_q^{-1}\mathbf u_0.
\end{align*}
For $\boldsymbol\Phi\in\mathcal C_c^1([0,T);D(A_{q'}))$, testing this equation with $A_{q'}\boldsymbol\Phi$ and integrating in
time gives \eqref{eq:transposition-graph}. 
In this process, the terms involving the time derivative and the initial data are treated using \eqref{eq:resolvent-duality}. The remaining term is handled by the definition of $\mathscr R_q$ in \eqref{eq:Rq}, which gives
\begin{align*}
 \int_\Omega\mathscr R_q(\mathbf u\otimes\mathbf u)
 \cdot A_{q'}\boldsymbol\Phi\,\dd x
 =-
 \int_\Omega(\mathbf u\otimes\mathbf u):\nabla\boldsymbol\Phi\,\dd x.
\end{align*}
Thus $\mathbf u$ satisfies both identities in Definition~\ref{def:solution},
and then is a very weak solution.
\end{proof}

\section{Proof of Theorem \ref{thm:main}}\label{sec:proof-main}

This section is devoted to the proof of Theorem~\ref{thm:main}. We first establish an existence result under the assumption that $\mathfrak M_T$
is sufficiently small. This assumption is then verified in two scenarios: $T$ small, and for all $T>0$ under the additional smallness condition~\eqref{eq:global-smallness}.


\begin{proposition}\label{prop:prescribed}
 	Assume that the data $\mathbf{F}, \mathbf{g}$ and $\mathbf{u}_0$ satisfy  \eqref{eq:data} and  \eqref{eq:flux}. There exists $\mu_0=\mu_0(\Omega,\alpha,\rho,q)>0$, independent of $T$, such that if
\begin{align*}
 \mathfrak M_T(\mathbf u_0,\mathbf F,\mathbf g)\le\mu_0,
\end{align*}
then NSE \eqref{eq:NS} admits a unique very weak solution for any $T\in(0,\infty]$.
Moreover,
\begin{align*}
 \norm{\mathbf u}_{L^s(0,T;L^q(\Omega))}
 \le C\mathfrak M_T(\mathbf u_0,\mathbf F,\mathbf g),
\end{align*}
where $C$ is independent of $T$.
\end{proposition}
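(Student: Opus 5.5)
By Proposition~\ref{prop:mild}, it suffices to solve the integral equation \eqref{eq:mild}, $\mathbf u=\overline{\mathbf U}-\mathcal Q(\mathbf u,\mathbf u)$, in $X_T:=L^s(0,T;L^q(\Omega))$. The plan is a Banach contraction argument on a closed ball of $X_T$, using only the linear estimate \eqref{eq:linear-est} and the bilinear estimate \eqref{eq:Q-est}. Both estimates have constants $C^*$ and $C_B$ that are independent of $T$, which is what makes the argument uniform in $T\in(0,\infty]$.

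Define $\Phi(\mathbf u):=\overline{\mathbf U}-\mathcal Q(\mathbf u,\mathbf u)$ and the ball $B_R:=\{\mathbf u\in X_T:\norm{\mathbf u}_{X_T}\le R\}$ with $R:=2C^*\mathfrak M_T$. For $\mathbf u\in B_R$, \eqref{eq:linear-est} and \eqref{eq:Q-est} give
\[
\norm{\Phi(\mathbf u)}_{X_T}\le C^*\mathfrak M_T+C_BR^2 .
\]
This is at most $R$ provided $C_BR\le\frac12$, that is, $4C^*C_B\mathfrak M_T\le 1$. For the contraction, use bilinearity of $\mathcal Q$ (inherited from linearity of $\mathcal K$ and of $(\mathbf w,\mathbf v)\mapsto\mathbf w\otimes\mathbf v$) to write
\[
\mathcal Q(\mathbf u,\mathbf u)-\mathcal Q(\mathbf v,\mathbf v)=\mathcal Q(\mathbf u-\mathbf v,\mathbf u)+\mathcal Q(\mathbf v,\mathbf u-\mathbf v).
\]
This yields $\norm{\Phi(\mathbf u)-\Phi(\mathbf v)}_{X_T}\le 2C_BR\norm{\mathbf u-\mathbf v}_{X_T}$. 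So it suffices to take $\mu_0:=\frac{1}{8C^*C_B}$, which gives $2C_BR\le\frac12$; $\mu_0$ depends only on $\Omega,\alpha,\rho,q$ through $C^*$ and $C_B$. Banach's fixed point theorem then gives a unique fixed point in $B_R$, which by Proposition~\ref{prop:mild} is a very weak solution satisfying $\norm{\mathbf u}_{X_T}\le 2C^*\mathfrak M_T$. When $T=\infty$ the same works verbatim, since Lemma~\ref{lem:K} and Proposition~\ref{prop:linear-solution} hold for $T=\infty$.

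The step needing care is uniqueness in the whole class $X_T$ rather than only in $B_R$. Let $\mathbf u,\mathbf v$ be two very weak solutions, i.e.\ solutions of \eqref{eq:mild} by Proposition~\ref{prop:mild}, where $\mathbf v$ is possibly large. I will argue by continuity in time. Restricting to $(0,T_1)$, note that the restrictions are still solutions on $(0,T_1)$, since $\mathcal K$ is causal: \eqref{eq:K} only involves $\tau<t$. Subtracting the two equations gives
\[
\norm{\mathbf u-\mathbf v}_{X_{T_1}}\le C_B\bigl(\norm{\mathbf u}_{X_{T_1}}+\norm{\mathbf v}_{X_{T_1}}\bigr)\norm{\mathbf u-\mathbf v}_{X_{T_1}}.
\]
By absolute continuity of the integral ($s<\infty$), $\norm{\mathbf u}_{X_{(a,a+h)}}+\norm{\mathbf v}_{X_{(a,a+h)}}\to0$ as $h\to0$, so $\mathbf u=\mathbf v$ on a short interval. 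To propagate, let $t_0$ be the supremum of times up to which $\mathbf u=\mathbf v$. On $(t_0,t_0+h)$ the difference $\mathbf w:=\mathbf u-\mathbf v$ satisfies
\[
\mathbf w=-\mathcal K\bigl(\mathbf w\otimes\mathbf u+\mathbf v\otimes\mathbf w\bigr),
\]
and since $\mathbf w=0$ before $t_0$, the time integral in \eqref{eq:K} effectively starts at $t_0$. Applying the Hardy--Littlewood--Sobolev bound of Lemma~\ref{lem:K} on $(t_0,t_0+h)$, where the smallness of $\norm{\mathbf u}+\norm{\mathbf v}$ there gives a contraction factor below $1$, forces $\mathbf w=0$ on that interval. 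This contradicts maximality unless $t_0=T$; for $T=\infty$, conclude by exhaustion. I expect this shift-in-time localization of $\mathcal K$ to be the main technical point, while existence and the a priori estimate are routine.
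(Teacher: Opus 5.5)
Your proposal is correct and follows essentially the paper's proof. Existence is a Banach contraction for \eqref{eq:mild} built on the $T$-independent constants $C^*$ and $C_B$, giving the same $\mu_0=(8C^*C_B)^{-1}$, and uniqueness in all of $L^s(0,T;L^q(\Omega))$ comes from the causality of $\mathcal K$ together with time-localized smallness of the two solutions. The differences are cosmetic: the paper centers its ball at $\overline{\mathbf U}$ (solving for $\mathbf V=\mathbf u-\overline{\mathbf U}$) and proves uniqueness by induction over a finite partition of $(0,T)$ rather than by your supremum argument.
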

    \begin{proof}
By Proposition~\ref{prop:linear-solution}, one has
\begin{align*}
 \norm{\overline{\mathbf U}}_{L^s(0,T;L^q(\Omega))}\le C^*\mathfrak M_T(\mathbf u_0,\mathbf F,\mathbf g).
\end{align*}
Write $\mathbf u=\overline{\mathbf U}+\mathbf V$.  Then the  equation
\eqref{eq:mild} is equivalent to
\begin{align*}
 \mathbf V=\mathcal T\mathbf V
 &:=-\mathcal Q(\overline{\mathbf U}+\mathbf V,
 \overline{\mathbf U}+\mathbf V).
\end{align*}
Let
$
 \delta_T:=\norm{\overline{\mathbf U}}_{L^s(0,T;L^q(\Omega))}
$
and consider the closed ball
\begin{align*}
 \mathbb B_{\delta_T}:=
 \left\{\mathbf V\in L^s(0,T;L^q(\Omega)):
 \norm{\mathbf V}_{L^s(0,T;L^q(\Omega))}\le\delta_T\right\}.
\end{align*}
For $\mathbf V\in\mathbb B_{\delta_T}$, the bilinear estimate \eqref{eq:Q-est} yields
\begin{align*}
 \norm{\mathcal T\mathbf V}_{L^s(0,T;L^q(\Omega))}
 \le4C_B\delta_T^2.
 \end{align*}
Similarly, for $\mathbf V_1,\mathbf V_2\in\mathbb B_{\delta_T}$,
\begin{align*}
 \norm{\mathcal T\mathbf V_1-
 \mathcal T\mathbf V_2}_{L^s(0,T;L^q(\Omega))}
 \le4C_B\delta_T
 \norm{\mathbf V_1-\mathbf V_2}_{L^s(0,T;L^q(\Omega))}.
\end{align*}
If $\delta_T\le \frac 1{8C_B}$, then $\mathcal T$ maps $\mathbb B_{\delta_T}$ into itself
and is a contraction with Lipschitz constant at most $\frac 12$. 
Banach's fixed-point theorem therefore gives a unique fixed point $\mathbf V\in\mathbb B_{\delta_T}$. By Proposition~\ref{prop:mild}, the  function $\mathbf u=\overline{\mathbf U}+\mathbf V$ is a very weak solution of \eqref{eq:NS}, and
\[
\|\mathbf u\|_{L^s(0,T;L^q(\Omega))}
\le 2\delta_T
\le 2C^*\mathfrak M_T(\mathbf u_0,\mathbf F,\mathbf g).
\]
Hence we may take $\mu_0=(8C_BC^*)^{-1}$.

It remains to prove uniqueness without assuming that another
solution lies in the contraction ball. Let
$\mathbf u_1,\mathbf u_2\in L^s(0,T;L^q(\Omega))$ be very weak solutions with the same
data, and set $\mathbf  u^*=\mathbf u_1-\mathbf u_2$. By
Proposition~\ref{prop:mild},
\begin{align}\label{eq:difference}
 \mathbf u^*=-\mathcal Q(\mathbf u^*,\mathbf u_1)
 -\mathcal Q(\mathbf u_2,\mathbf u^*).
\end{align}
For $T<\infty$, we argue by partition. Since $\mathbf u_1,\mathbf u_2\in L^s(0,T;L^q(\Omega))$, the function
\[
t\mapsto\int_0^t\left(
\|\mathbf u_1(\tau)\|_{L^q(\Omega)}^s
+\|\mathbf u_2(\tau)\|_{L^q(\Omega)}^s\right)d\tau
\]
is absolutely continuous. Thus there exists a finite partition
\[
0=t_0<t_1<\cdots<t_N=T
\]
such that, with $I_j=(t_{j-1},t_j)$,
\[
C_B\left(
\|\mathbf u_1\|_{L^s(I_j;L^q(\Omega))}
+\|\mathbf u_2\|_{L^s(I_j;L^q(\Omega))}\right)<1
\qquad \forall j=1,\dots,N.
\]

On $I_1$, the definition of $\mathcal K$ only involves times $0<\tau<t$, so applying \eqref{eq:Q-est} to \eqref{eq:difference} gives
\[
\|\mathbf u^*\|_{L^s(I_1;L^q(\Omega))}
\le C_B\left(
\|\mathbf u_1\|_{L^s(I_1;L^q(\Omega))}
+\|\mathbf u_2\|_{L^s(I_1;L^q(\Omega))}\right)
\|\mathbf u^*\|_{L^s(I_1;L^q(\Omega))}.
\]
Since the factor in parentheses is strictly less than $1$, it follows that $\mathbf u^*=0$ on $I_1$.
Now suppose $\mathbf u^*=0$ on $(0,t_{j-1})$. For $t\in I_j$, the time integrals defining $\mathcal Q(\mathbf u^*,\mathbf u_1)(t)$ and $\mathcal Q(\mathbf u_2,\mathbf u^*)(t)$ over $0<\tau<t_{j-1}$ vanish. Hence only the integrals over $t_{j-1}<\tau<t$ remain. Applying the same estimate on $I_j$ (after translating to an interval starting at zero) yields $\mathbf u^*=0$ on $I_j$. By induction, $\mathbf u^*=0$ on $(0,T)$.

For $T=\infty$, the same argument applies to every finite interval, yielding uniqueness.
\end{proof}

Now we are in position to prove Theorem~\ref{thm:main}.

\medskip

\noindent\textit{Proof of Theorem~\ref{thm:main}.}
First, we establish the local-in-time existence of very weak solution. Choose \(T^*\in(0,\min\{T,1\}]\), with the convention \(\min\{\infty,1\}=1\). As \(t\downarrow 0\), due to the boundedness of the analytic semigroup, 
\[
\|e^{-\tau A_q}\mathbf u_0\|_{L^s(0,t;L^q(\Omega))}
\le C t^{\frac 1s}\|\mathbf u_0\|_{L^q(\Omega)}
\]
shows that this initial term in \(\mathfrak M_t\) decays to zero. The force and boundary terms in \(\mathfrak M_t\) also tend to zero as \(t\downarrow 0\) by absolute continuity of the Bochner integrals. Hence there exists \(T^*>0\) sufficiently small such that \(\mathfrak M_{T^*}\le \mu_0\). Applying Proposition~\ref{prop:prescribed} gives a unique very weak solution $\mathbf{u}$ on \((0,T^*)\), together with the estimate
\[
\|\mathbf u\|_{L^s(0,T^*;L^q(\Omega))}
\le C\left(
\|\mathbf u_0\|_{L^q(\Omega)}
+\|\mathbf F\|_{L^s(0,T^*;L^r(\Omega))}
+\|\mathbf g\|_{L^s(0,T^*;W^{1-\frac1q,q}(\partial\Omega))}
\right).
\]
This proves the first part of Theorem~\ref{thm:main}, with \(T^*\) depending on the data.

For the global assertion, assume additionally that the data satisfy the smallness condition \eqref{eq:global-smallness}. By  \eqref{eq:exp}, we have
\[
\|e^{-tA_q}\mathbf u_0\|_{L^s(0,T;L^q(\Omega))}
\le C_*\|\mathbf u_0\|_{L^q(\Omega)}
\]
with \(C_*\) independent of \(T\). Consequently, for $T=\infty$,
\begin{equation}\label{global}
\mathfrak M_\infty(\mathbf u_0,\mathbf F,\mathbf g)
\le C_*\|\mathbf u_0\|_{L^q(\Omega)}
+\|\mathbf F\|_{L^s(0,\infty;L^r(\Omega))}
+\|\mathbf g\|_{L^s(0,\infty;W^{1-\frac1q,q}(\partial\Omega))}.
\end{equation}
 Choosing \(\mu\) in \eqref{eq:global-smallness} sufficiently small so that the right-hand side is bounded by \(\mu_0\), we apply Proposition~\ref{prop:prescribed}  to obtain a unique very weak solution $\mathbf{u}$, with the estimate
\[
\|\mathbf u\|_{L^s(0,\infty;L^q(\Omega))}
\le C\,\mathfrak M_\infty(\mathbf u_0,\mathbf F,\mathbf g),
\]
which is exactly \eqref{eq:global-estimate} since \eqref{global} holds. 

It remains to show that the very weak solution obtained above admits a pressure. 
Set
\[
\mathbf R:=\partial_t\mathbf u-\Delta\mathbf u+\Div(\mathbf u\otimes\mathbf u)-\Div\mathbf F\in\mathcal D'((0,T)\times\Omega).
\]
Taking a solenoidal test field $\boldsymbol\Phi\in C_c^\infty((0,T)\times\Omega)$ in \eqref{eq:smooth-transposition} gives
\[
\langle \mathbf R,\boldsymbol\Phi\rangle=0
\qquad \Div\boldsymbol\Phi=0.
\]
By the de Rham theorem, there exists $\pi\in\mathcal D'((0,T)\times\Omega)$ such that $\mathbf R+\nabla\pi=0$. The uniqueness up to a time-dependent distribution follows from the fact that a distribution with zero spatial gradient depends only on time, which ends the proof of Theorem~\ref{thm:main}. \hfill$\Box$
 

\appendix

\section{Proof of Proposition~\ref{prop:linear}}\label{app:stokes-toolbox}

We explain how the results in Breit--Gaudin \cite{BreitGaudin2025} apply to the set of exponents $\mathcal E_q=\{q,q',r,r'\}$; the parameter $\varepsilon$ in Definition~\ref{def:multiplier-boundary} is chosen sufficiently small so that the estimates cited below hold for each exponent in this set.

\textup{(i)} The Hodge decomposition, the boundedness of $\mathbb P_p$,
and the duality relation \eqref{eq:projection-duality} follow from
\cite[Theorem~4.38 and Proposition~4.40]{BreitGaudin2025}. These results
also identify $(L_\sigma^{p'}(\Omega))^*$ with $L_\sigma^p(\Omega)$ under
the integral pairing.

\textup{(ii)} We first recall the relevant results from Breit--Gaudin \cite{BreitGaudin2025}. In their work, the Stokes--Dirichlet operator, which we denote by $\widetilde A_p$, is the solenoidal Dirichlet realization of the Laplacian. At the $L^p_\sigma$-level it acts as
$$
\widetilde A_p \mathbf v=\mathbb P_p(-\Delta \mathbf v)
$$
in the distributional sense on its operator domain. \cite[Theorem~6.5]{BreitGaudin2025} gives a densely defined, invertible and $0$-sectorial operator on $L_\sigma^p(\Omega)$, with a bounded $H^\infty(\Sigma_\theta)$-functional calculus for every $\theta\in(0,\pi)$, and with the fractional-domain identification needed for the square-root property. 
In particular, these results yield invertibility, sectoriality, a bounded
$H^\infty$-functional calculus and the square-root identification
$D(\widetilde A_p^{1/2})=W_{0,\sigma}^{1,p}(\Omega)$ with the norm
equivalence in \eqref{eq:square-root}. They also give the
following variational characterization: for
$\mathbf v\in D(\widetilde A_p)$ and
$\boldsymbol\phi\in W_{0,\sigma}^{1,p'}(\Omega)$,
\begin{align}\label{eq:appendix-weak-realization}
 \int_\Omega\widetilde A_p\mathbf v\cdot\boldsymbol\phi\,\dd x
 =
 \int_\Omega\nabla\mathbf v:\nabla\boldsymbol\phi\,\dd x.
\end{align}
To prove \eqref{eq:resolvent-duality}, let $\mathbf f \in L^p_\sigma(\Omega)$ and $\boldsymbol\psi \in L^{p'}_\sigma(\Omega)$. Set $\mathbf v = A_p^{-1}\mathbf f$ and $\boldsymbol\phi = A_{p'}^{-1}\boldsymbol\psi$. Applying \eqref{eq:appendix-weak-realization} to the pair $(\mathbf v, \boldsymbol\phi)$ and then to the dual pair $(\boldsymbol\phi, \mathbf v)$ yields the desired duality identity \eqref{eq:resolvent-duality}.
The semigroup duality
\eqref{eq:semigroup-duality} follows from the corresponding duality of the
Stokes resolvents and the standard representation of the analytic semigroup.

It remains to identify $\widetilde A_p$ with the operator  $A_p$ defined by
\eqref{eq:stokes-action}. The inclusion
$D(\widetilde A_p)\subset D(A_p)$ follows from
\eqref{eq:appendix-weak-realization}. Conversely, for $\mathbf v\in D(A_p)$, set $\mathbf f=A_p\mathbf v$ and $\mathbf w:=\widetilde A_p^{-1}\mathbf f$. Then
$\mathbf v-\mathbf w\in W_{0,\sigma}^{1,p}(\Omega)$ and
\begin{align*}
 \int_\Omega\nabla(\mathbf v-\mathbf w):\nabla\boldsymbol\phi\,\dd x=0
 \qquad
 \forall\boldsymbol\phi\in W_{0,\sigma}^{1,p'}(\Omega).
\end{align*}
Using \eqref{eq:square-root-form}, this becomes
\begin{align*}
 \int_\Omega A_p^{1/2}(\mathbf v-\mathbf w)\cdot
 A_{p'}^{1/2}\boldsymbol\phi\,\dd x=0.
\end{align*}
Since $A_{p'}^{1/2}$ maps $W_{0,\sigma}^{1,p'}(\Omega)$ onto
$L_\sigma^{p'}(\Omega)$, we get $A_p^{1/2}(\mathbf v-\mathbf w)=0$. The invertibility of $A_p$ gives $\mathbf v=\mathbf w$. Hence $A_p=\widetilde A_p$.

\textup{(iii)} Since $L_\sigma^p(\Omega)$ is a complemented subspace of
$L^p(\Omega)$, it is a UMD space for $1<p<\infty$. The bounded
$H^\infty$-calculus of angle smaller than $\frac\pi2$ implies
$\mathcal R$-sectoriality, and Weis' maximal-regularity theorem stated
as \cite[Theorem~2.22]{BreitGaudin2025} yields
\eqref{eq:MR}. The constant on finite intervals is obtained by extending
the right-hand side by zero to $(0,\infty)$ and restricting the solution.

\textup{(iv)} The  bound \eqref{eq:exp} is a variant of the spectral estimate for the Stokes--Dirichlet semigroup on $L_\sigma^p(\Omega)$ in \cite[Chapter~3]{Haase2006}.
Since
\begin{align*}
\frac12+\frac32\left(\frac1{q'}-\frac1{(\frac q2)'}\right)
=\frac12+\frac{3}{2q},
\end{align*}
it follows from \cite[Proposition~6.19 and Meta-Theorem~6.20]{BreitGaudin2025} with the source space $L_\sigma^{q'}(\Omega)$ and the target space
$L^{(q/2)'}(\Omega)$ that  
\begin{align*}
\norm{\nabla e^{-tA_{q'}}\boldsymbol\psi}_{L^{(\frac q2)'}(\Omega)}
\le Ct^{-\frac12-\frac{3}{2q}}
\norm{\boldsymbol\psi}_{L^{q'}(\Omega)}
\qquad t>0.
\end{align*}
for some $C>0$.
Then by \eqref{eq:exp} we have
 \begin{equation*}
\norm{\nabla e^{-tA_{q'}}\boldsymbol\psi}_{L^{(\frac q2)'}(\Omega)}=\norm{\nabla e^{-\frac t 2 A_{q'}  }e^{-\frac t 2 A_{q'} }\boldsymbol\psi}_{L^{(\frac q2)'}(\Omega)}\\
\leq C \left(\frac t 2\right)^{-\frac12-\frac{3}{2q}}e^{-\omega_{q'}\frac t 2}\norm{\boldsymbol\psi}_{L^{q'}(\Omega)},
 \end{equation*}
which implies \eqref{eq:cross-exp}.

\bibliographystyle{plain}
\bibliography{VWS0722}

@article{Amann2003,
	author = {Amann, H.},
	title = {{Navier-Stokes} equations with nonhomogeneous {Dirichlet} data},
	journal = {J. Nonlinear Math. Phys.},
	volume = {10},
	pages = {1--11},
	year = {2003}
}

@incollection{Amann2002,
	author = {Amann, H.},
	title = {Nonhomogeneous {Navier-Stokes} equations with integrable low-regularity data},
	booktitle = {Nonlinear problems in mathematical physics and related topics},
	series ={Int. Math. Ser.},
	publisher = {Kluwer/Plenum},
	address = {New York},
	pages = {1--28},
	year = {2002}
}

@article{FarwigGaldiSohr2006a,
	author  = {Farwig, R. and Galdi, G. P. and Sohr, H.},
	title   = {A new class of weak solutions of the {Navier--Stokes} equations with nonhomogeneous data},
	journal = {J. Math. Fluid Mech.},
	volume  = {8},
	pages   = {423--444},
	year    = {2006},
	doi     = {10.1007/s00021-005-0182-6}
}

@incollection{FarwigGaldiSohr2005,
	author = {Farwig, R. and Galdi, G. P. and Sohr, H.},
	title = {Very weak solutions of stationary and instationary {Navier-Stokes} equations with nonhomogeneous data},
	booktitle = {Nonlinear elliptic and parabolic problems},
	series = {Progr. Nonlinear Differential Equations Appl.},
	volume = {64},
	publisher = {Birkh\"auser},
	address = {Basel},
	pages = {113--136},
	year = {2005}
}

@article{FarwigGaldiSohr2005b,
	author = {Farwig, R. and Galdi, G. P. and Sohr, H.},
	title = {Very weak solutions and large uniqueness classes of stationary {Navier-Stokes} equations in bounded domains of {$\mathbb{R}^2$}},
	journal = {J. Differential Equations},
	volume = {227},
	pages = {564--580},
	year = {2006}
}

@article{FarwigGaldiSohr2006b,
	author = {Farwig, R. and Kozono, H. and Sohr, H.},
	title = {Very weak solutions of the {Navier-Stokes} equations in exterior domains with nonhomogeneous data},
	journal = {J. Math. Soc. Japan},
	volume = {59},
	pages = {127--150},
	year = {2007}
}

@article{Shen2012,
	author = {Shen, Z.},
	title = {Resolvent estimates in {$L^p$} for the {Stokes} operator in {Lipschitz} domains},
	journal = {Arch. Ration. Mech. Anal.},
	volume = {205},
	pages = {395--424},
	year = {2012}
}

@article{GabelTolksdorf2022,
	author = { Gabel, F.  and  Tolksdorf, P.},
	title = {The Stokes operator in two-dimensional bounded 
		{Lipschitz} domains},
	journal = {J. Differential Equations},
	volume = {340},
	pages = {227--272},
	year = {2022},
}

@article{GengShen2024,
	author = {Geng, J. and Shen, Z.},
	title = {Resolvent estimates for the {Stokes} operator in bounded and exterior {$C^1$} domains},
	journal = {Math. Ann.},
	volume = {391},
	pages = {1467--1503},
	year = {2025}
}

@article{GengShen2024Linfty,
	author = {Geng, J. and Shen, Z.},
	title = {Resolvent estimates in {$L^\infty$} for the {Stokes} operator in nonsmooth domains},
	journal = {Invent. Math.},
	volume = {243},
	pages = {657--701},
	year = {2026}
}

@article{KunstmannWeis2017,
	author = { Kunstmann, P.C. and Weis, L. },
	title = {New criteria for the {$H^\infty$}-calculus and the {Stokes} operator on bounded {Lipschitz} domains},
	journal = {J. Evol. Equ},
	volume = {17},
	pages = {387--409},
	year = {2017},
}

@article{Tolksdorf2018,
	author = {Tolksdorf, P.},
	title = {On the {$L^p$}-theory of the {Navier-Stokes} equations on three-dimensional bounded {Lipschitz} domains},
	journal = {Math. Ann.},
	volume = {371},
	pages = {445--460},
	year = {2018}
}

@article{Coscia2017,
	author = {Coscia, V.},
	title = {Existence and uniqueness of very weak solutions to the steady-state {Navier-Stokes} problem in {Lipschitz} domains},
	journal = {J. Math. Fluid Mech.},
	volume = {19},
	pages = {819--829},
	year = {2017}
}

@article{Breit2024,
	author = {Breit, D.},
	title = {Regularity results in {2D} fluid-structure interaction},
	journal = {Math. Ann.},
	volume = {388},
	pages = {1495--1538},
	year = {2024}
}

@article{Breit2025,
	author = {Breit, D.},
	title = {Partial boundary regularity for the {Navier-Stokes} equations in irregular domains},
	journal = {J. Funct. Anal.},
	volume = {289},
	pages = {111188},
	year = {2025}
}

@book{LiebLoss2001,
	author    = {Lieb, E. H. and Loss, M.},
	title     = {Analysis},
	series    = {Graduate Studies in Mathematics},
	volume    = {14},
	edition   = {Second},
	publisher = {American Mathematical Society},
	address   = {Providence, RI},
	year      = {2001},
	isbn      = {978-0-8218-2783-3}
}

@article{BreitGaudin2025,
	author        = {Breit, D. and Gaudin, A.},
	title         = {Optimal regularity results for the {Stokes--Dirichlet} problem},
	journal       = {arXiv preprint},
	year          = {2025, 10.48550/arXiv.2511.19091},
	eprint        = {2511.19091},
	archiveprefix = {arXiv},
	primaryclass  = {math.AP},
	doi           = {10.48550/arXiv.2511.19091}
}

@book{MS09,
	author = {Maz'ya, V. G. and Shaposhnikova, T. O.},
	title = {Theory of {Sobolev} multipliers: With applications to differential and integral operators},
	series = {Grundlehren der mathematischen Wissenschaften},
	volume = {337},
	publisher = {Springer-Verlag},
	address = {Berlin},
	year = {2009}
}

@book{Haase2006,
	author    = {Haase, M.},
	title     = {The Functional Calculus for Sectorial Operators},
	series    = {Operator Theory: Advances and Applications},
	volume    = {169},
	publisher = {Birkh{\"a}user},
	address   = {Basel},
	year      = {2006},
	doi       = {10.1007/3-7643-7698-8}
}

@book{Galdi2011,
	author    = {Galdi, G. P.},
	title     = {An Introduction to the Mathematical Theory of the {Navier--Stokes} Equations: Steady-State Problems},
	series    = {Springer Monographs in Mathematics},
	edition   = {Second},
	publisher = {Springer},
	address   = {New York},
	year      = {2011},
	doi       = {10.1007/978-0-387-09620-9}
}

@article{Giga1985Domain,
  author    = {Giga, Y.},
  title     = {Domains of fractional powers of the {S}tokes operator in {$L_r$} spaces},
  journal   = {Arch. Ration. Mech. Anal.},
  volume    = {89},
  pages     = {251--265},
  year      = {1985}
}

@article{Giga1986Semilinear,
  author    = {Giga, Y.},
  title     = {Solutions for semilinear parabolic equations in {$L^p$} and regularity of weak solutions of the {N}avier--{S}tokes system},
  journal   = {J. Differential Equations},
  volume    = {62},
  pages     = {186--212},
  year      = {1986}
}

@article{GigaMiyakawa1985,
  author    = {Giga, Y. and Miyakawa, T.},
  title     = {Solutions in {$L_r$} of the {N}avier--{S}tokes initial value problem},
  journal   = {Arch. Ration. Mech. Anal.},
  volume    = {89},
  pages     = {267--281},
  year      = {1985}
}

@article{GigaSohr1991,
  author    = {Giga, Y. and Sohr, H.},
  title     = {Abstract {$L^p$} estimates for the {C}auchy problem with applications to the {N}avier--{S}tokes equations in exterior domains},
  journal   = {J. Funct. Anal.},
  volume    = {102},
  pages     = {72--94},
  year      = {1991}
}





\end{document}